\numberwithin{equation}{section}
\numberwithin{figure}{section}
\theoremstyle{plain}
\newtheorem{thm}{Theorem}
\newtheorem{lem}[thm]{Lemma}
\newtheorem{prop}[thm]{Proposition}
\newtheorem{coro}[thm]{Corollary}
\newtheorem*{claim}{Claim}
\theoremstyle{definition}
\newtheorem{example}[thm]{Example}
\newtheorem{definition}{Definition}
\theoremstyle{remark}
\newtheorem*{remark}{Remark}
\newcommand{\R}{\mathbb{R}}
\newcommand{\C}{\mathbb{C}}
\newcommand{\Z}{\mathbb{Z}}
\newcommand{\B}{\mathbb{B}}
\renewcommand{\P}{\mathbb{P}}
\newcommand{\tr}{\operatorname{Tr}}
\newcommand{\sinc}{\operatorname{sinc}}
\begin{document}

\title[Sampling real polynomials]{Sampling of real multivariate polynomials and 
pluripotential theory}

\author{Robert J. Berman}
\address{Department of Mathematical Sciences, 
Chalmers University of Technology and
University of Gothenburg, 412 96 G\"oteborg, Sweden}
\email{\href{mailto:robertb@chalmers.se}{\texttt{robertb@chalmers.se}}}

\author{Joaquim Ortega-Cerd\`a}
\address{Dept.\ Matem\`atica Aplicada i An\`alisi,
 Universitat  de Barcelona and BGSMath,
Gran Via 585, 08007 Bar\-ce\-lo\-na, Spain}
\email{\href{mailto:jortega@ub.edu}{\texttt{jortega@ub.edu}}}

\thanks{The first author was supported by grants from the European and Swedish 
Research Council and the Knut and Alice Wallenberg foundation and the second 
author is supported by the the Spanish Ministerio de Econom\'ia y Competividad 
(grant MTM2014-51834-P) and the Generalitat de Catalunya (grant 2014-SGR-289)}

\begin{abstract}
We consider the problem of stable sampling of multivariate real polynomials of
large degree  in a general framework where the polynomials are defined on an
affine real algebraic variety $M$, equipped with a weighted measure. In
particular, this framework contains the well-known setting of trigonometric
polynomials (when $M$ is a torus equipped with its invariant measure), where the
limit of large degree corresponds to a high frequency limit, as well as the
classical setting of one-variable orthogonal algebraic polynomials (when $M$ is
the real line equipped with a suitable measure), where the sampling nodes can be
seen as generalizations of the zeros of the corresponding orthogonal
polynomials. It is shown that a necessary condition for sampling, in the general
setting, is that the asymptotic density of the sampling points is greater than
the density of the corresponding weighted equilibrium measure of $M$, as defined
in pluripotential theory. This result thus generalizes the well-known Landau
type results for sampling on the torus, where the corresponding critical density
corresponds to the Nyqvist rate, as well as the classical result saying that the
zeros of orthogonal polynomials become equidistributed with respect to the
logarithmic equilibrium measure, as the degree tends to infinity. 
\end{abstract}
\date{\today}
\maketitle

\section{Introduction}
\subsection{Background}
By the classical Whittaker-Shannon-Kotelnikov sampling theorem a band-limited
signal $f$ on the real line $\R$, normalized so that its frequency is in
$[-1,1]$ may be recovered from its values at the points $t_{j}=j\pi$ where $j$
ranges over the integers and 
\[
\int_{\R}\left|f(t)\right|^{2}dt=\pi\sum_{j}\left|f(t_{j})\right|^{2}.
\]
In mathematical terms,  $f$ is in the Paley-Wiener space $PW_{1}(\R)$ consisting
of all functions in $L^{2}(\R)$ whose Fourier transform is supported in
$[-1,1]$. More generally, in the theory of \emph{non-regular sampling} a
sequence $\Lambda:=\{\lambda\}_{\lambda\in\Lambda}$ of points on the real line
$\R$ is said to be \emph{sampling} for $PW_{1}(\R)$ if there exists a constant
$C$ such that the following sampling inequality holds 
\[
\frac{1}{C}\int_{\R}\left|f(t)\right|^{2}dt\leq\sum_{\lambda\in\Lambda}
\left|f(\lambda)\right|^{2}\leq C\int_{\R}\left|f(t)\right|^{2}dt
\]
for any any $f\in PW_{1}(\R)$, ensuring that the reconstruction of $f$ is stable 
in the $L^{2}$-sense. Corresponding results also hold in the higher dimensional 
setting where $\R$ is replaced with $\R^{n}$ and the band $[-1,1]$ with the 
unit-cube $[-1,1]^{n}$ (or more general any fixed convex body of volume one). By 
the seminal result of Landau \cite{la}, a necessary condition for a set 
$\Lambda$ to be sampling is that the corresponding asymptotic density of points 
in $\R^{n}$ (in the sense of Beurling) is at least equal to the Nyqvist rate 
$1/\pi^{n}$,
i.e. 
\[
\liminf_{R\rightarrow\infty}\frac{\#\{\Lambda\cap R\Omega\}}{R^{n}}
\geq\int_{\Omega}\frac{1}{\pi^{n}}dt
\]
(uniformly over translations) for any smooth domain $\Omega \subset \R^{n}$
assuming a uniform separation lower bound on the points in $\Lambda$. In
one-dimension the reversed strict inequality is also a sufficient condition for
sampling, but not in higher dimensions. 

By a rescaling, Landau's density results may also be formulated in terms of the
high frequency limit which appears when the frequency domain $[-1,1]^{n}$ is
replaced with $k[-1,1]^{n}$ for $k$ large, i.e. $PW_{1}(\R^{n})$ is replaced
with the corresponding Paley-Wiener space $PW_{k}(\R^{n})$. In this context a
sequence $\Lambda_{k}:=\{\lambda^{(k)}\}$ of sets of points on $\R^{n}$ is said
to be sampling for $PW_{k}(\R^{n})$ if 
\[
\frac{1}{C}\int_{\R^{n}}\left|f(t)\right|^{2}dt\leq\frac{1}{k^{n}} 
\sum_{\lambda^{(k)}\in\Lambda_{k}}\left|f(\lambda^{(k)})\right|^{2}\leq 
C\int_{\R^{n}}\left|f(t)\right|^{2}dt
\]
for any $f\in PW_{k}(\R^{n})$ with the constant $C$ independent of $k$. For the
sake of simplicity if it is clear from the context we will omit the superindex
$k$ in $\lambda^{(k)}$ and write simply $\lambda\in \Lambda_k$. The
corresponding necessary density condition on the sampling points may then be
reformulated as 
\[
\liminf_{R\rightarrow\infty}\frac{\#\{\Lambda_{k}\cap\Omega\}}{k^{n}}\geq 
\int_{\Omega}\frac{1}{\pi^{n}}dt
\]
uniformly over translations for any domain $\Omega \subset \R^{n}$ with
$|\partial \Omega|=0$. (Landau's setting corresponds to the case when
$\Lambda_{k}$ is of the form $k^{-1}\Lambda$, but his arguments extend to this
high-frequency setting).

There is also a natural \emph{compact} analogue of the Paley-Wiener setting on
$\R^{n}$ which is the one which is most relevant for the present paper, where
$\R$ (or $\R^{n})$ is replaced with the circle $S^{1}:=\R/2\pi$. Then the role
of $PW_{k}(\R)$ is played by the space $H_{k}(S^{1})$ of all finite Fourier
series on $[0,2\pi]$ with frequencies in $[-k,k]$, i.e. the space of all
trigonometric polynomials of degree at most $k$. A sampling sequence of finite
sets of points $\Lambda_{k}\subset S^{1}$ in this setting is also called a
Marcinkiewicz-Zygmund family \cite{o-s}. In a similar setting to ours it has
been studied in \cite{bb} the sampling sequences for powers of a line bundle
with positive curvature on compact complex manifolds. In this case there are
precise estimates for the Bergman kernel so that Landau's techniques carry
through.

From an abstract point of view the previous settings fit into a general Hilbert 
space framework where $H_{k}(M)$ is a given sequence of Hilbert spaces of 
functions on a set $M$ with reproducing kernels $K_{k}(x,y)$. Then a sequence 
$\Lambda_{k}$ of sets of points on $M$ is said to be\emph{ sampling for 
$H_{k}(M)$} if the family of normalized functions $\kappa_{\lambda} 
:=K_{k}(\cdot,\lambda)/\|K_{k}(\cdot,\lambda)\|$ for $\lambda\in\Lambda_{k}$, 
form a \emph{frame} in the Hilbert space $H_{k}(M)$, in the sense of 
Duffin-Schaeffer \cite{du}, i.e.:
\[
\frac{1}{C}\left\Vert f\right\Vert ^{2}\le \sum_{\lambda\in\Lambda_k} |\langle 
f,\kappa_{\lambda}\rangle|^2\le  C\left\Vert f\right\Vert ^{2},\quad \forall 
f\in
H_k(M),
\]
which is equivalent to the sampling inequalities: 
\begin{equation}\label{sampineq}
\frac{1}{C}\left\Vert f\right\Vert 
^{2}\leq \sum_{\lambda\in\Lambda_k}
\frac{|f(\lambda)|^2}{K_k(\lambda,\lambda)} \leq C\left\Vert f\right\Vert ^{2},
\quad \forall f\in H_k(M).
\end{equation}
where we will assume that $C$ can be taken to be independent of $k$.

\subsection{The present setting}
The main aim of the present paper is to generalize the Landau type necessary
density conditions for sampling on $S^1$ to a general setting where the Hilbert
space $H_{k}(M)$ consist of polynomials of degree at most $k$ on an affine real
algebraic variety $M$ equipped with a weighted measure. We are not dealing with 
the very interesting problem of finding sufficient conditions for sampling 
multivariate polynomials. This and its numerical implementation is a very basic 
question in signal analysis, see for instance \cite{fgs} and the references 
therein for the one-variable numerical sampling.

Our setting is the following: by
definition $M$ is the variety cut out by a finite numbers of polynomials on
$\R^{m}$ and $H_{k}(M)$ is the space of polynomials of total degree at most $k$
restricted to $M$ and equipped with the $L^{2}$ norm
\[
\left\Vert p_{k}\right\Vert _{L^{2}(e^{-k\phi}\mu)}^{2}:=\int_{M}\left|p_{k} 
\right|^{2}e^{-k\phi}d\mu
\]
defined by a compactly supported measure $\mu$ on $M$ and a continuous function 
$\phi$ on $M$ (referred to as the weight function). Following \cite{bbw} we will 
refer to the pair $(\mu,\phi)$ as a "weighted measure". In order that the latter 
norm be non-degenerate some regularity assumption has to be made on $\mu$. The 
affine case, i.e. when $M=\R^m$  is thus the classical setting for 
multivariate orthogonal polynomials. We will assume two regularity conditions: 
the Bernstein-Markov property and moderate growth, see Section~\ref{setup} for 
the precise definitions.

Our first main result in this general setting is
\begin{thm}\label{general}
Let $M$ be an affine real algebraic variety equipped with a non degenerate
measure $\mu$ and a weight function $\phi$. Assume that the pair ($\mu$, $\phi$)
satisfies the Bernstein-Markov property \eqref{BM}  and it is of moderate growth
\eqref{MG}. Then a necessary condition for a sequence $\Lambda_{k}$ of sets of
points in $M$ to be sampling for the space $H_{k}(M)$ of polynomials of degree
at most $k$, with respect to the weight $k\phi$ and measure $\mu$, is that 
\begin{equation}\label{landausamp}
\liminf_{k\rightarrow\infty}\frac{1}{N_{k}}\sum_{\lambda
\in \Lambda_{k}}\delta_{\lambda}\geq\mu_{eq} 
\end{equation} 
in the weak topology on the measures on $M$, where $\mu_{eq}$ denotes the
normalized equilibrium measure of the weighted measure $(\mu,\phi)$ and
$N_k=\dim(H_k(M))$.
\end{thm}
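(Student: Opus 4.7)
The plan is to adapt Landau's concentration-operator strategy to the weighted polynomial setting, with pluripotential-theoretic Bergman kernel asymptotics playing the role that explicit Fourier analysis does on the torus. By the standard approximation of continuous test functions by indicators of $\mu_{eq}$-continuity sets, the weak lower bound \eqref{landausamp} follows once I prove: for every compact set $\Omega \subset M$ with $\mu_{eq}(\partial\Omega)=0$ and every open neighbourhood $\Omega' \supset \overline\Omega$,
\[
\liminf_{k\to\infty}\frac{\#(\Lambda_k \cap \Omega')}{N_k} \;\geq\; \mu_{eq}(\Omega),
\]
after which I let $\Omega'$ shrink to $\Omega$.

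\textbf{Concentration operators and a Szeg\H{o}-type limit.}
Introduce the Toeplitz operators $T^k_u := P_k M_u P_k$ on $H_k(M)$, where $P_k$ is the orthogonal projector onto $H_k(M)$ in $L^2(e^{-k\phi}\mu)$ and $M_u$ is multiplication by a continuous $u$. Their traces are $\tr T^k_u = \int_M u(x)\, K_k(x,x) e^{-k\phi(x)}\, d\mu(x)$, so the Bernstein-Markov property, combined with the pluripotential identification of the weak limit of the normalized Bergman measure $N_k^{-1}K_k(x,x) e^{-k\phi}\, d\mu$ with $\mu_{eq}$, yields $N_k^{-1} \tr T^k_u \to \int u\, d\mu_{eq}$ for every continuous $u$. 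Iterating this computation on powers $(T^k_u)^n$ gives a Szeg\H{o}-type statement $N_k^{-1}\tr \psi(T^k_u) \to \int \psi\!\circ u \, d\mu_{eq}$ for every polynomial, hence every continuous, $\psi$. Squeezing a continuous cutoff around $\chi_\Omega$ from above and below then shows that the number of eigenvalues of $T^k_{\chi_\Omega}$ exceeding $1-\epsilon$ equals $(\mu_{eq}(\Omega)+o(1))N_k$, first as $k\to\infty$ and then as $\epsilon \to 0$.

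\textbf{Linking concentration to the sampling inequality.}
Let $V_k(\Omega,\epsilon) \subset H_k(M)$ be the span of the eigenfunctions of $T^k_{\chi_\Omega}$ with eigenvalues greater than $1-\epsilon$; then $\dim V_k(\Omega,\epsilon) = (\mu_{eq}(\Omega)+o(1))N_k$ and every unit vector $f \in V_k(\Omega,\epsilon)$ satisfies $\int_{M\setminus\Omega}|f|^2 e^{-k\phi}d\mu \leq \epsilon$. The lower bound of the sampling inequality \eqref{sampineq} gives $\sum_{\lambda\in\Lambda_k} |f(\lambda)|^2/K_k(\lambda,\lambda) \geq 1/C$. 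To trade the full sum for one restricted to $\lambda \in \Lambda_k \cap \Omega'$, I would split $f(\lambda) = \langle \chi_\Omega f, K_k(\cdot,\lambda)\rangle + \langle \chi_{\Omega^c} f, K_k(\cdot,\lambda)\rangle$: the second term is $O(\sqrt{\epsilon\, K_k(\lambda,\lambda)})$, and for $\lambda \notin \Omega'$ the first is controlled by the off-diagonal Bergman kernel between $\Omega$ and $M\setminus\Omega'$. For small $\epsilon$ this forces the restricted sampling map $V_k(\Omega,\epsilon) \to \ell^2(\Lambda_k \cap \Omega')$ to be injective (indeed uniformly bounded below), so $\#(\Lambda_k\cap\Omega') \geq \dim V_k(\Omega,\epsilon) = (\mu_{eq}(\Omega)+o(1))N_k$, as required.

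\textbf{Main obstacle.}
The hard part will be the off-diagonal Bergman kernel decay: for compact $K \subset \Omega$ and $L \subset M \setminus \Omega'$,
\[
\int_K \int_L \frac{|K_k(x,y)|^2}{K_k(y,y)}\, e^{-k\phi(x)}\, d\mu(x)\, d\mu(y) \;=\; o(N_k).
\]
Unlike the compact K\"ahler or toric frameworks where such decay is a corollary of curvature-based Bergman expansions, here I only have the Bernstein-Markov assumption \eqref{BM} and the moderate-growth assumption \eqref{MG}. The estimate must instead be extracted from pluripotential theory on a complexification of $M$, by using the weighted extremal envelope defining $\mu_{eq}$ to produce Bernstein-Walsh-type upper bounds on weighted polynomials off the support of $\mu_{eq}$, and is what carries most of the technical content of the argument.
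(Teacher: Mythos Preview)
Your approach is Landau's classical concentration-operator argument, which is \emph{not} the route the paper takes. The paper instead builds a transport plan between $\sigma_k=\frac{1}{N_k}\sum_{\lambda}c_\lambda\delta_\lambda$ (with $c_\lambda\in[0,1]$ coming from the canonical dual frame to the normalized reproducing kernels) and the Bergman measure $\beta_k=\frac{1}{N_k}B_k\,d\mu_k$, and shows their Wasserstein distance is $O(k^{-1/2})$ using the off-diagonal estimate of Theorem~\ref{prop:off-diagonal bergman kernel}. The paper explicitly remarks that Landau's method ``relies on certain submean inequalities for $f_k$ which pose difficulties in our general setting'' --- and that is exactly where your outline breaks.

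The gap is the passage from an integral off-diagonal bound to control of the \emph{sum} $\sum_{\lambda\notin\Omega'}|f(\lambda)|^2/K_k(\lambda,\lambda)$ for a concentrated $f\in V_k(\Omega,\epsilon)$. Writing $f(\lambda)=\langle\chi_\Omega f,K_k(\cdot,\lambda)\rangle+\langle\chi_{\Omega^c}f,K_k(\cdot,\lambda)\rangle$, the second term is indeed handled by the upper sampling bound (since $P_k(\chi_{\Omega^c}f)\in H_k$ has norm $\le\sqrt\epsilon$). But for the first term you need $\sum_{\lambda\notin\Omega'}\frac{1}{K_k(\lambda,\lambda)}\int_\Omega|K_k(\lambda,x)|^2e^{-k\phi}d\mu(x)$ to be small, and under just \eqref{BM} and \eqref{MG} there is no mechanism to convert this discrete sum into the available integral estimate: you have neither a submean inequality for $|p_k|^2$ nor uniform separation of $\Lambda_k$, and the integrand as a function of $\lambda$ is a polynomial of degree $2k$, so the moderate-growth hypothesis (which only links degrees $k$ and $k+1$) does not let you invoke sampling. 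This is precisely why the paper's dual-frame argument is arranged so that the only ``illegal'' polynomial appearing is $(x-\lambda)K_k(\lambda,x)$ of degree $k+1$, for which \eqref{MG} suffices. Finally, your diagnosis of the main obstacle is off target: the needed off-diagonal decay (Theorem~\ref{prop:off-diagonal bergman kernel}) is not extracted from Bernstein--Walsh pluripotential bounds but from the purely algebraic fact that multiplication by a coordinate $x_i$ sends $H_{k-1}$ into $H_k$, so $T_{x_i}^2-T_{x_i^2}$ has rank $O(k^{n-1})$.
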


\subsection{Sampling on compact real algebraic varieties equipped with a volume
form}

One disadvantage of the definition of the sampling inequalities \eqref{sampineq}
in this general setting is that it is of a rather abstract nature as it involves
the reproducing kernel $K_{k}(x,x)$ which in general is impossible to compute
explicitly. On the other hand, only the asymptotic behaviour of $K_{k}(x,x)$ as
$k\rightarrow\infty$ is needed and these asymptotics can often be estimated.
Also, if $\mu_{eq}$ is absolutely continuous with respect to Lebesgue measure
than the condition \eqref{landausamp} above may be written as
\begin{equation}\label{Nyquist}
\liminf_{k\rightarrow\infty}\frac{\#\{\Lambda_{k}\cap\Omega\}}{\#N_{k}} 
\geq\frac{\mu_{eq}(\Omega)}{\mu_{eq}(M)}
\end{equation}
for any smooth domain $\Omega$.We will refer to the latter condition as the 
"pluripotential Nyqvist bound". One particularly interesting case where 
Theorem~\ref{general} can be made explicit is the following:

\begin{thm}\label{thm:sampling affine compact intro}
Let $M$ be an $n$-dimensional affine real algebraic variety, which is
non-singular and compact, let $\mu$ be a volume form on $M$ and let $\phi=0$.
Then there exists a positive constant $C$ such that the reproducing kernel for
$(H_{k}(M),\mu)$ satisfies 
\begin{equation}\label{eq:bounds on bergman function intro}
\frac{1}{C}k^{n}\leq K_{k}(x,x)\leq Ck^{n}
\end{equation}
and thus $(\mu,\phi)$ is non degenerate, it satisfies the Bernstein-Markov
property and it is of moderate growth. Moreover, a necessary condition for a
sequence $\Lambda_{k}$ of sets of points on $X$ to be sampling for $H_k(M)$ is
that the density of sampling points is at least equal to the density of the
equilibrium measure $\mu_{eq}$ of $M$, as $k\rightarrow\infty$, i.e., the 
pluripotential Nyquist bound \eqref{Nyquist} holds.
\end{thm}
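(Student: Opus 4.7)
The plan is to reduce Theorem~\ref{thm:sampling affine compact intro} to Theorem~\ref{general} by establishing the two-sided Bergman kernel estimate \eqref{eq:bounds on bergman function intro}, after which non-degeneracy of $(\mu,0)$, the Bernstein--Markov property \eqref{BM} and moderate growth \eqref{MG} all become essentially formal consequences, and the equilibrium measure $\mu_{eq}$ is automatically mutually absolutely continuous with $\mu$ so that \eqref{landausamp} is equivalent to the pluripotential Nyquist bound \eqref{Nyquist}. A preliminary Hilbert-polynomial calculation for the coordinate ring $\R[x_1,\dots,x_m]/I(M)$, reduced to the irreducible components, shows that $N_k:=\dim H_k(M)\sim c_M k^n$, compatible with the trace identity $\int_M K_k(x,x)\,d\mu=N_k$, which forces the two-sided estimate to hold at least on average.

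For the upper bound I would use the extremal characterization
\[
K_k(x,x)=\sup\{|p(x)|^2:p\in H_k(M),\ \|p\|_{L^2(\mu)}\le 1\}
\]
and prove the equivalent Nikolskii-type inequality $|p(x)|^2\le Ck^n\|p\|_{L^2(\mu)}^2$. Covering $M$ by finitely many smooth charts $\psi:B(0,1)\subset\R^n\to V\subset M$ (available by non-singularity) reduces everything to a local inequality. On each chart one combines a Markov--Nikolskii estimate for polynomials of degree $k$ on $\R^m$ with a trace step onto the $n$-dimensional smooth slice $V$; the essential point is that restricting a polynomial of degree $k$ on $\R^m$ to an $n$-dimensional chart behaves in $L^\infty$--$L^2$ comparison like a polynomial of degree $k$ on $\R^n$, which gives the correct $k^n$ dependence. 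Compactness of $M$ then yields uniformity in $x$.

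The lower bound $K_k(x,x)\ge k^n/C$ is the main obstacle, since it requires producing for each $x\in M$ a peak polynomial $p_k\in H_k(M)$ with $|p_k(x)|^2/\|p_k\|_{L^2(\mu)}^2\gtrsim k^n$. My strategy is: (i) pick a generic linear projection $\pi:\R^m\to\R^n$ which is étale on $M$ at $x$ and whose fibre $\pi^{-1}(\pi(x))\cap M=\{x,x_2,\dots,x_d\}$ consists of finitely many simple points; (ii) pull back by $\pi$ a reproducing-kernel-type concentration polynomial of degree $\lfloor k/2\rfloor$ on $\R^n$ centred at $\pi(x)$, whose $k^n$ peak-to-mass ratio is classical (Christoffel function asymptotics) for polynomials on a compact body with a smooth weight; (iii) multiply by a bounded-degree polynomial on $\R^m$ equal to $1$ at $x$ and vanishing at $x_2,\dots,x_d$, thereby killing the contributions of the far fibre points. Étaleness of $\pi$ at $x$ ensures that $\pi_*\mu$ is comparable to Lebesgue measure near $\pi(x)$, so the $k^n$ concentration ratio transfers to $M$, and smooth variation of both the projection and the cutoff across the compact manifold $M$ delivers uniformity in $x$.

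Once both bounds on $K_k$ are in hand the remaining verifications are straightforward: $K_k\le Ck^n$ immediately yields the Nikolskii inequality $\|p\|_{L^\infty(M)}^2\le Ck^n\|p\|_{L^2(\mu)}^2$, from which \eqref{BM} and \eqref{MG} follow; the lower bound on $K_k$ combined with the upper bound prevents $\mu_{eq}$ from degenerating so that \eqref{landausamp} and \eqref{Nyquist} become equivalent; and Theorem~\ref{general} then concludes. The anticipated technical hurdle is the peak-polynomial construction of Step (iii), which in the Kähler setting of \cite{bb} is handled by plurisubharmonic envelope methods and Tian--Catlin--Zelditch type Bergman kernel asymptotics; the generic-projection plus concentration-kernel approach outlined above appears to be the most elementary real-variable substitute.
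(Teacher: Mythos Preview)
Your reduction to Theorem~\ref{general} via the two-sided bound \eqref{eq:bounds on bergman function intro} is exactly the paper's strategy, and the formal deductions (non-degeneracy, \eqref{BM}, \eqref{MG}, and the equivalence of \eqref{landausamp} with \eqref{Nyquist} via absolute continuity of $\mu_{eq}$) are correctly identified as immediate. But your proposed proofs of the two inequalities are purely real-variable, whereas the paper obtains both through the complexification $X$ of $M$, and as written your arguments have gaps.

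For the \emph{upper bound}, the chart step does not work: restricting a degree-$k$ polynomial on $\R^m$ along a smooth parametrization $\psi:B(0,1)\to M$ does \emph{not} give a polynomial on $\R^n$ (already on $S^1$ the chart $t\mapsto(t,\sqrt{1-t^2})$ makes $p\circ\psi$ transcendental), so there is no Nikolskii inequality on $\R^n$ to apply. The paper instead uses subharmonicity of $|p_k|^2$ on complex balls $B_X(x,1/k)\subset X$ together with the tubular estimate $\int_{U_{1/k}}|p_k|^2\,dV_X\lesssim k^{-n}\int_M|p_k|^2\,dV_M$ (Theorem~\ref{tubular}, proved via Demailly's Monge--Amp\`ere three-circles lemma and the Guillemin--Stenzel tube function). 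A real-variable route \emph{does} exist, but it is not the one you describe: one must invoke the tangential Markov inequality $\|\nabla_t p\|_{L^\infty(M)}\le Ck\|p\|_{L^\infty(M)}$ of \cite{BosLevMilTay}, which is where algebraicity of $M$ enters essentially (cf.\ Theorem~\ref{bernsteincomplet}), and then run the standard ``the maximum is nearly attained on a ball of radius $c/k$'' argument.

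For the \emph{lower bound} the difficulty is more serious. Your separator polynomial $r$ in step~(iii) vanishes only \emph{at} the extra fibre points $x_2,\dots,x_d$, which by itself does not damp the $L^2$-mass of $q_k\circ\pi$ on neighborhoods of those points; and for uniformity in $x$ you need the \emph{entire} fibre $\pi^{-1}(\pi(x))\cap M$ to stay uniformly away from the critical locus of $\pi|_M$, which fails since a map from a compact $n$-manifold onto a region of $\R^n$ must have critical points (near such $x$ the pushforward $\pi_*\mu$ blows up and the Christoffel estimate degenerates). Patching finitely many projections and tuning the vanishing order of $r$ to the local branching might be made to work, but this is far from the ``smooth variation'' you invoke. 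The paper bypasses all of this: it works on $X$ with the weight $e^{-kv_M^\epsilon}$ (a perturbation of the Siciak extremal function), constructs a local holomorphic peak from a rescaled $\sinc^2$, multiplies by a Gaussian $e^{k\epsilon z^2}$ to gain localization in the tangential directions, and then globalizes by solving $\bar\partial$ with H\"ormander $L^2$-estimates on $\mathcal O(1)^{\otimes k}\to\bar X$; this gives $B_{kv_M^\epsilon}\gtrsim k^{2n}$ on $M$, and a comparison lemma (Lemma~\ref{lem:comparison of l2 norms for polyn}) converts this into $B_k\gtrsim k^{-n}B_{kv_M^\epsilon}\gtrsim k^n$. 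The $\bar\partial$-correction is exactly what produces a global polynomial without any sheet-separation argument.
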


The definition of the equilibrium measure of M and more generally the 
equilibrium measure attached to a weighted measure will be recalled in 
Section~\ref{extremal}. As pointed out above this result thus generalizes the 
results in \cite{o-s} concerning the case when $M$ is the unit-circle. Moreover, 
the case when $M$ is the unit-sphere corresponds to the case studied in 
\cite{Ma}, where the signals in questions are spherical harmonics. While in all 
these special cases the equilibrium measure $\mu_{eq}$ is explicitly given by 
the Haar measure (since the corresponding Riemannian manifolds are homogeneous) 
the equilibrium measure of a general real affine algebraic variety appears to be 
of a highly non-explicit nature. Another generalization of the homogeneous cases 
was considered in \cite{op}, where the signals are ``band-limited'' sums of 
eigenfunctions of the Laplacian on a given compact Riemannian manifolds $(M,g)$ 
and then the role of the equilibrium measure is played by the Riemannian volume 
form. 

It is not evident a priori that there are sampling sequences at all. This 
is assured with the following Bernstein type theorem:
\begin{thm}\label{bernsteincomplet}
 Given a smooth compact real manifold $M\subset \mathbb \R^m$ of dimension $n$,
the following are equivalent:
 \begin{itemize}
  \item $M$ is algebraic
  \item $M$ satisfies a Bernstein inequality, i.e., for some $q\ge 1$ (or for
all $q\ge 1$):
  \[
    \|\nabla_t p\|_{L^q(M)} \le C_q \deg(p) \|p\|_{L^q(M)}.
  \]
  \item There is a uniformly separated  $\Lambda_k$ such that for some (all) 
$q\ge 1$
  \[
    \int_M |p|^q dV_M \lesssim \frac 1{k^n}\sum_{\lambda\in\Lambda_k}
|p(\lambda)|^q \lesssim \int_M |p|^q dV_M, \quad \forall p\in \mathcal
P_k(\R^m).
  \]
 \end{itemize}
\end{thm}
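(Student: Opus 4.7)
The plan is to prove the equivalence by the cycle $(1) \Rightarrow (2) \Rightarrow (3) \Rightarrow (1)$, establishing the forward implications in the stronger ``for all $q$'' form and the reverse implications in the weaker ``for some $q$'' form; this simultaneously yields the robustness of the characterization.

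For $(1) \Rightarrow (2)$, that a compact non-singular real algebraic variety $M$ satisfies a tangential Bernstein inequality, I would invoke the family of results on polynomials restricted to real algebraic sets due to Ple\'sniak, Baran, Bos--Brudnyi and others. Locally $M$ is covered by real algebraic arcs of bounded degree, and along such an arc a polynomial of total degree $k$ restricts to a univariate polynomial of degree $O(k)$, to which the classical Bernstein inequality on an interval applies. A partition-of-unity argument based on the compactness of $M$ and the bounded geometry of the local parametrizations then yields the global bound $\|\nabla_t p\|_{L^q(M)} \le C_q k \|p\|_{L^q(M)}$ for every $q \ge 1$.

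For $(2) \Rightarrow (3)$ I would take $\Lambda_k$ to be any maximal $(\varepsilon/k)$-separated subset of $M$, with $\varepsilon>0$ a small constant to be chosen. Such a net is uniformly separated by construction, has cardinality $\asymp k^n$, and the geodesic balls $B_\lambda := B(\lambda,\varepsilon/k) \cap M$ cover $M$ with bounded overlap. The Bernstein inequality implies that $|p|^q$ varies by a multiplicative factor $1 + O(\varepsilon q)$ on each $B_\lambda$, so integrating $|p|^q$ on the balls and summing yields both sampling inequalities simultaneously, in the standard Marcinkiewicz--Zygmund fashion.

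For $(3) \Rightarrow (1)$, which is the conceptual heart of the theorem, the left sampling inequality forces the evaluation map $H_k(M) \to \C^{\Lambda_k}$ to be injective --- any $p$ vanishing on $\Lambda_k$ must have $\|p\|_{L^q(M)}=0$ --- whence $\dim H_k(M) \le \#\Lambda_k$. Uniform separation together with $M$ being an $n$-manifold forces $\#\Lambda_k \le C k^n$, so $\dim H_k(M) = O(k^n)$. On the other hand $\dim H_k(M)$ coincides with the value at $k$ of the Hilbert function of the Zariski closure $V$ of $M$ in $\R^m$, which grows like $k^{\dim V}$; since $V$ contains the $n$-dimensional manifold $M$ we have $\dim V \ge n$, and the $O(k^n)$ bound then forces $\dim V = n$, exhibiting $M$ as a full-dimensional subset of a real algebraic variety of its own dimension. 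The main obstacle I expect is precisely this final dimension-counting step: the Hilbert polynomial statement is most naturally formulated over $\C$ and in the projective setting, so the careful passage to the complexification $M_{\C} \subset \C^m$ and the comparison of real and complex Zariski closures (to ensure the real Hilbert function actually detects the real dimension) will constitute the most delicate part of the argument.
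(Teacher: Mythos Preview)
Your cycle $(1)\Rightarrow(2)\Rightarrow(3)\Rightarrow(1)$ and your $(3)\Rightarrow(1)$ via Hilbert-function growth match the paper; in fact the paper is terser than you on that last step and glosses over the real/complex subtlety you flag, simply asserting that $\dim\mathcal P_k(M)\lesssim k^n$ forces $M$ to be algebraic.

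The serious gap is in $(2)\Rightarrow(3)$. Your claim that the Bernstein inequality forces $|p|^q$ to vary by a multiplicative factor $1+O(\varepsilon q)$ on each ball $B_\lambda$ is false: Bernstein is a global $L^q$ bound and gives no local pointwise control whatsoever (a polynomial can vanish at $\lambda$ and be nonzero on the rest of $B_\lambda$, so no multiplicative comparison is possible). What the $L^q$ Bernstein inequality \emph{does} give, via Poincar\'e on each $B_\lambda$, is a sampling inequality for the \emph{averages} $\fint_{B_\lambda}p$; that is exactly how the paper runs $(2)\Rightarrow(1)$ in Theorem~\ref{converse}. But passing from averages to point values $p(\lambda)$ needs an additional ingredient that Bernstein alone does not supply. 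The paper never attempts $(2)\Rightarrow(3)$ directly; instead it proves $(1)\Rightarrow(3)$ (Proposition~\ref{nets}) by exploiting the complexification $X$ of $M$: $\sup_{B_\lambda}|\nabla_t p|$ is bounded by a Cauchy estimate on a complex ball in $X$, and the resulting integral over the tube $U_{1/k}\subset X$ is pushed back to $M$ by the tubular estimate of Theorem~\ref{tubular}. This subharmonicity-via-complexification is the missing idea in your outline.

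A secondary point: your $(1)\Rightarrow(2)$ sketch is looser than it appears. The arc-restriction idea is indeed what underlies the known $q=\infty$ case of Bos--Levenberg--Milman--Taylor, but assembling one-dimensional directional estimates into a full tangential $L^q$ gradient bound by partition of unity is not routine for $q<\infty$. The paper regards $(1)\Rightarrow(2)$ for $q<\infty$ as new content: it proves $q=1$ by the same complexification route (Cauchy inequality plus Theorem~\ref{tubular}, the latter resting on Demailly's three-circles convexity for Monge--Amp\`ere pseudospheres together with the Guillemin--Stenzel Grauert-tube potential), and then interpolates with the known $q=\infty$ endpoint. That pluripotential machinery is the technical core of both forward implications in the paper and is entirely absent from your plan.
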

This generalizes the main result of \cite{blmr} where the case $q=\infty$ was 
considered.

\subsection{Sampling of multivariate real polynomials on 
convex domains}
Another instance where Theorem~\ref{general} can be made more precise is the 
case were $M=\R^n$,  $\mu$ is the Lebesgue measure restricted to a smooth 
bounded convex domain $\Omega$ and $\phi=0$. In this case the equilibrium 
measure is very well understood, see \cite{BedTay} and \cite{blmr}. It behaves 
roughly as $d\mu_{eq}\simeq 1/\sqrt{d(x,\partial\Omega)}dV$, (this will also 
follow from the asymptotics \eqref{kernelconvex} below). 
\begin{thm}\label{thm:samplingconvex}
Let $\Omega$ be a smoothly bounded convex domain in $\R^n$. Then the 
reproducing kernel for $(H_{k}(\Omega),dV)$ satisfies 
\begin{equation}\label{kernelconvex}
B_k(x)=K_k(x,x) \simeq \min\left( \frac{k^n}{\sqrt{d(x)}}, k^{n+1}\right)\quad
\forall x\in\Omega.
\end{equation}
where  $d(x)$ denotes the distance of $x\in \Omega$ to the boundary of $\Omega$.
Thus it  satisfies the Bernstein-Markov property  \eqref{BM}  and it 
is of moderate growth \eqref{MG}.
Moreover, a necessary condition for the sequence $\Lambda_{k}$ of sets
of points on $\Omega$ to be sampling for $H_k(M)$ is that the density of 
sampling points is at least equal to the density of the equilibrium measure
$\mu_{eq}$ of $\Omega$, as $k\rightarrow\infty$, i.e. the pluripotential 
Nyquist bound \eqref{Nyquist} holds.
\end{thm}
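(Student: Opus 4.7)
The proof splits into two parts: establishing the two-sided reproducing kernel estimate \eqref{kernelconvex}, and then deducing the density condition. Given \eqref{kernelconvex}, the Bernstein--Markov property and moderate growth are immediate from the polynomial bounds $k^n/C \le B_k(x) \le Ck^{n+1}$ (the first bound holds on compacta of $\Omega$, the second uniformly). The sampling conclusion then follows from Theorem~\ref{general} together with the known absolute continuity of $\mu_{eq}$ for smoothly bounded convex domains, with explicit density comparable to $d(x,\partial\Omega)^{-1/2}$, established in \cite{BedTay} and \cite{blmr}; this absolute continuity is exactly what is needed to rephrase \eqref{landausamp} as the pluripotential Nyquist bound \eqref{Nyquist}.

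The core of the argument is the two-sided estimate on $B_k(x)$. For the upper bound, I would combine a global Nikolskii--Markov inequality for convex domains, which yields $|p(x)|^2 \le Ck^{n+1}\|p\|_{L^2(\Omega)}^2$ for every $p \in H_k(\Omega)$ and hence $B_k \le Ck^{n+1}$ uniformly, with a local slicing argument sharpening this to $B_k(x) \le Ck^n/\sqrt{d(x)}$ when $d(x) \ge 1/k^2$. For the local bound I would work in tangential--normal coordinates near the boundary point closest to $x$, apply the classical one-dimensional Christoffel asymptotic $K_k^{1D}(t,t) \simeq \min(k/\sqrt{1-t^2}, k^2)$ along the inward normal chord through $x$ (contributing a factor $k/\sqrt{d(x)}$), and iterate in the $n-1$ tangential directions, whose chords have length bounded away from zero at the relevant scale by convexity, each contributing a factor $k$.

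For the lower bound I plan to construct an explicit nearly extremal polynomial peaked at $x$. Using the same coordinates, multiply a rescaled Legendre-type polynomial of degree $\simeq k$ in the normal variable, peaked at $x$ with $L^2$-mass $\lesssim \sqrt{d(x)}/k$ in the bulk regime (respectively $\lesssim 1/k^2$ when $d(x) \le 1/k^2$), by $n-1$ Jackson-type polynomial kernels of degree $\simeq k$ in the tangential directions, each normalized to have value $1$ at $x$ and $L^2$-mass $\simeq 1/\sqrt{k}$. After a fixed dilation of degrees, the resulting $p_k$ has total degree $\le k$, $p_k(x) = 1$, and $\|p_k\|_{L^2(\Omega)}^2 \lesssim \sqrt{d(x)}/k^n$ (respectively $\lesssim 1/k^{n+1}$), which matches the upper bound via the extremal characterization $B_k(x) = \sup_p |p(x)|^2/\|p\|^2$.

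The main obstacle I anticipate is that $H_k(\Omega)$ is defined by \emph{total} degree $\le k$, not by degree $\le k$ in each variable, so the one-dimensional building blocks cannot simply be tensored: the degree budget must be allocated carefully between the normal and tangential factors and this allocation must be uniform in $x\in\Omega$. Convexity of $\Omega$ enters essentially here, both in guaranteeing that the straight chords used in the slicing and in the construction remain inside $\Omega$, and in producing a uniform control of the boundary geometry near arbitrary $x$, which is what ultimately dictates the matching of the two regimes of \eqref{kernelconvex} at the critical scale $d(x) \simeq 1/k^2$.
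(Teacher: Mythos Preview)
Your route differs from the paper's, and the upper-bound half of your slicing argument has a genuine gap.

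The paper does not slice at all. It uses the monotonicity of the Bergman function under domain inclusion (smaller domain $\Rightarrow$ larger $B_k$) to reduce the general convex case to two model domains: for each $x\in\Omega$ it inscribes a ball $B(y,r)\subset\Omega$ and circumscribes a cube $Q(R)\supset\Omega$, both tangent to $\partial\Omega$ at the point closest to $x$, obtaining $B_{Q(R)}(x)\le B_\Omega(x)\le B_{B(y,r)}(x)$ with $d_\Omega(x)=d_{Q(R)}(x)=d_{B(y,r)}(x)$. The cube estimate comes from the one-dimensional interval asymptotic of \cite{Nevai}. The ball upper bound is obtained by passing to polar coordinates and enlarging $H_k(\mathbb{B})$ to a tensor product of radial polynomials (Jacobi weight, contributing the factor $k/\sqrt{d(x)}$) with trigonometric polynomials in the angular variables (Bergman function $\simeq k^{n-1}$ by rotation invariance), so the product gives $k^n/\sqrt{d(x)}$.

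The problem with your tangential--normal slicing is geometric: for a smoothly bounded convex $\Omega$, the tangential chords through a point $x$ at distance $d(x)$ from the boundary have length $\simeq\sqrt{d(x)}$, not length bounded away from zero. On an interval of length $L$ the one-dimensional Christoffel function at the centre scales like $k/L$, so each tangential direction contributes $k/\sqrt{d(x)}$ rather than the factor $k$ you claim; iterating would produce $k^n/d(x)^{n/2}$, which is the wrong power. Your lower-bound construction can be repaired by estimating $\|p_k\|_{L^2(\Omega)}\le\|p_k\|_{L^2(Q)}$ on a circumscribed box, where the integral genuinely factors and each tangential Jackson kernel does have $L^2$-mass $\simeq k^{-1/2}$ on a fixed interval; this is essentially the paper's lower bound. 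But the upper bound cannot be saved this way (enlarging the domain goes the wrong direction), and one really needs a device such as the polar-coordinate embedding or the inscribed-ball comparison to decouple the single boundary-normal factor $k/\sqrt{d(x)}$ from $n-1$ genuinely bulk factors $k$.
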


\subsection{Interpolating sequences}
A natural companion problem to that of sampling sequences are the interpolating 
sequences. In the same  abstract point of view that we considered for sampling 
sequences we consider a sequence $H_{k}(M)$  of Hilbert spaces of functions on 
a set $M$ with reproducing kernels $K_{k}(x,y)$ and instead of a frames we 
consider Riesz sequences of normalized reproducing kernels (see 
Section~\ref{sec:interpolation} for the precise definitions).

Landau in \cite{la} studied also these sequences in the Paley-Wiener space, and 
his observation was that locally if a sequence  $\Lambda$ is 
interpolating then its density should be smaller than the local density of the 
space. We can again use the ideas inspired in \cite{no} to deal with the case of 
polynomials in real algebraic varieties.

Our main result to this problem is
\begin{thm}\label{generalinterp}
Let $M$ be an affine real algebraic variety equipped with the Lebesgue measure.
Then a necessary condition for a sequence $\Lambda_{k}$
of points on $M$ to be interpolating for the space $H_{k}(M)$ of polynomials of 
degree at most $k$, is that 
\begin{equation}\label{landauint}
\limsup_{k\rightarrow\infty}\frac{1}{N_{k}}\sum_{\lambda \in
\Lambda_{k}}\delta_{\lambda}\leq\mu_{eq}
\end{equation}
in the weak topology on $M$, where $\mu_{eq}$ denotes the normalized
equilibrium measure of $M$ and $N_k=\dim(H_k(M))$,  i.e. the following reversed 
pluripotential Nyqvist bound holds:
\[
\limsup_{k\rightarrow\infty}\frac{\#\{\Lambda_{k}\cap\Omega\}}{\#N_{k}}
\leq\frac
{\mu_{eq}(\Omega)}{\mu_{eq}(M)}
\]
for any given smooth domain $\Omega$ in $M$.
\end{thm}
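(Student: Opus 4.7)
The plan is to adapt the Landau-type dimension counting argument from the proof of the sampling Theorem~\ref{general} to the reverse direction, in the spirit of~\cite{no}. The interpolation hypothesis means, by definition, that the normalized reproducing kernels $\kappa_\lambda := K_k(\cdot,\lambda)/\sqrt{K_k(\lambda,\lambda)}$ for $\lambda\in\Lambda_k$ form a Riesz sequence in $H_k(M)$ with constants uniform in $k$; in particular, they satisfy a lower Riesz bound $\sum_\lambda|a_\lambda|^2 \leq C\bigl\|\sum_\lambda a_\lambda\kappa_\lambda\bigr\|^2$, so they are linearly independent and any subfamily indexed by a set $S\subset\Lambda_k$ spans a subspace of dimension exactly $\#S$.

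Fix a smooth test domain $\Omega\subset M$ with $\mu_{eq}(\partial\Omega)=0$ together with a slightly enlarged domain $\Omega'\supset\overline{\Omega}$, and introduce the concentration (Toeplitz-type) operator $T_{\Omega'}\colon H_k(M)\to H_k(M)$ defined by $\langle T_{\Omega'}f,g\rangle = \int_{\Omega'}f\bar g\,dV$. It is self-adjoint with spectrum in $[0,1]$ and trace $\operatorname{tr} T_{\Omega'} = \int_{\Omega'}K_k(x,x)\,dV(x)$. It satisfies the elementary counting lemma: any subspace $V\subset H_k(M)$ on which $\langle T_{\Omega'}f,f\rangle\geq\alpha\|f\|^2$ for all $f\in V$ has $\dim V\leq \alpha^{-1}\operatorname{tr} T_{\Omega'}$. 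I would apply this to $V_\Omega := \operatorname{span}\{\kappa_\lambda : \lambda\in\Lambda_k\cap\Omega\}$, whose dimension equals $\#(\Lambda_k\cap\Omega)$.

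The central technical step is to prove the concentration estimate
\[
\langle T_{\Omega'}f,f\rangle \geq (1-\epsilon_k)\|f\|^2 \qquad \forall f\in V_\Omega,
\]
with $\epsilon_k\to 0$ as $k\to\infty$. Granted this, the counting lemma combined with the pluripotential Bergman kernel mass asymptotic $N_k^{-1}K_k(x,x)\,dV\rightharpoonup d\mu_{eq}$ (the same ingredient driving Theorem~\ref{general}) yields $\limsup_k N_k^{-1}\#(\Lambda_k\cap\Omega)\leq \mu_{eq}(\Omega')$. Letting $\Omega'\downarrow\Omega$ and using $\mu_{eq}(\partial\Omega)=0$ gives the reversed Nyquist bound for a fixed test domain, and testing against a dense family of such $\Omega$ upgrades this to the weak-topology form~\eqref{landauint}.

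The hard part will be the concentration estimate on $V_\Omega$. It reduces to an off-diagonal localization of the reproducing kernel: each $\kappa_\lambda$ with $\lambda\in\Omega$ should have most of its $L^2$-mass inside $\Omega'$, and the cross-correlations $\langle(I-T_{\Omega'})\kappa_\lambda,\kappa_\mu\rangle$ for distinct $\lambda,\mu\in\Lambda_k\cap\Omega$ must decay fast enough in $|\lambda-\mu|$ to survive summation against an arbitrary linear combination (the requisite uniform separation of $\Lambda_k$ is already forced by the Riesz lower bound). In the K\"ahler Bergman-kernel setting such decay is the familiar Gaussian off-diagonal asymptotic; in the present real affine setting it must be extracted from the Siciak--Zaharjuta pluripotential extremal function, which controls $\log|p|/k$ for $p\in H_k$ away from the support of $\mu_{eq}$, together with the Bernstein--Markov and moderate-growth hypotheses that are automatically available for the Lebesgue measure on a real algebraic variety. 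This is precisely the point at which the setting-specific kernel estimates developed for the proof of Theorem~\ref{general} are re-used.
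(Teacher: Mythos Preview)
Your outline follows the classical Landau concentration-operator route, which is \emph{not} the argument the paper gives. The paper proceeds, as in the sampling theorem, via a Wasserstein transport estimate: it takes the subspace $F_k\subset H_k$ spanned by all the $\kappa_\lambda$, the biorthogonal dual basis $\{g_\lambda\}$ in $F_k$, and the transport plan $\rho_k=\tfrac1{N_k}\sum_\lambda \delta_\lambda(y)\otimes g_\lambda(x)\kappa_\lambda(x)\,d\mu(x)$. Its marginals are $\sigma_k=\tfrac1{N_k}\sum_\lambda\delta_\lambda$ and $\beta_k=\tfrac1{N_k}\mathcal K_k(x,x)\,d\mu$, where $\mathcal K_k$ is the reproducing kernel of $F_k$; one shows $W(\sigma_k,\beta_k)=O(1/\sqrt k)$ using only the Plancherel--Polya inequality (available because interpolating sequences are uniformly separated, Propositions~\ref{sepint} and~\ref{PlancherelPolya}) together with the \emph{averaged} off-diagonal bound of Theorem~\ref{prop:off-diagonal bergman kernel}. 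Since $\mathcal K_k\le K_k$ and $\tfrac1{N_k}B_k\,d\mu\rightharpoonup\mu_{eq}$, the inequality $\limsup_k\sigma_k\le\mu_{eq}$ follows.

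Your proposal has a real gap at the ``central technical step''. The uniform concentration estimate $\langle T_{\Omega'}f,f\rangle\ge(1-\epsilon_k)\|f\|^2$ for \emph{every} $f\in V_\Omega$ demands uniform-in-$\lambda$ localization of each $\kappa_\lambda$ and enough decay of the cross terms $\int_{M\setminus\Omega'}\kappa_\lambda\overline{\kappa_\mu}$ to control an arbitrary linear combination through the Riesz bounds. The only off-diagonal information available here is Theorem~\ref{prop:off-diagonal bergman kernel}, an $L^2$-averaged bound $\tfrac1{k^n}\iint|K_k(x,y)|^2|x-y|^2\,d\mu\,d\mu\le C/k$; it yields neither pointwise decay of $K_k(x,y)$ nor a uniform bound on $\int_M|x-\lambda|^2|\kappa_\lambda(x)|^2\,d\mu(x)$ for individual $\lambda$, and the cross terms cannot be summed with only this input. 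This is precisely the obstacle flagged in the paper's introduction (``Landau's approach also relies on certain submean inequalities\dots which pose difficulties in our general setting''), and it is why the authors replaced Landau's scheme by the transport argument, which is tailored to consume an averaged estimate directly. Your appeal to Gaussian-type decay extracted from the Siciak extremal function is not established anywhere in the paper and would itself be a substantial new result.
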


\subsection{Discussion of the proof of Theorem~\ref{thm:sampling affine 
compact intro}}
Let us make some brief comments on the circle of ideas involved in the proof of
the previous theorems. First of all, since the sampling points uniquely
determine a polynomial $p_{k}$ on $M$ the total number $\#\Lambda_{k}$ of
sampling points at level $k$ is of course at least equal to the dimension
$N_{k}$ of $H_{k}(M)$. As a well-known guiding principle the necessary
conditions for sampling should come from a localized version of this argument
saying the asymptotic lower density of sampling points should at least be given
by the ``local dimension'' of the Hilbert space $(H_{k}(M),\left\Vert
\cdot\right\Vert _{L^{2}(d\mu_k)})$, were $\mu_k:=e^{-k\phi}d\mu$, i.e. by the
leading asymptotics of $N_{k}^{-1}$ times the function
\[
B_{k}(x):=\sum_{i=1}^{N_{k}}\left|p_{i}^{(k)}(x)\right|^{2} e^{-k\phi(x)}
\]
where $\{p_{i}^{(k)}(x)\}$ is any orthonormal base in the Hilbert
$(H_{k}(M),\left\Vert \cdot\right\Vert _{L^{2}(d\mu_k)})$ (note that integrating
$B_{k}(x)$ with respect to $dV$ indeed gives the dimension $N_{k}$ of
$H_{k}(M))$. The independence of the choice of base follows from the following
extremal representation of $B_{k}(x)$: 
\begin{equation}\label{eq:B-k as extremal quotient intro}
B_{k}(x):=\sup_{p_{k}\in H_{k}(M)}\frac{\left|p_{k}(x)\right|^{2}e^{-k\phi(x)}}
{\int_{M}\left|p_{k}\right|^{2}d\mu_k}
\end{equation}
From the point of view of general Hilbert space theory $B_{k}(x)$ may be
written as $B_{k}(x)=K_{k}(x,x)$ where $K_{k}(x,y)$ is the reproducing kernel of
the Hilbert space $(H_{k}(M),\left\Vert \cdot\right\Vert _{L^{2}(d\mu_k)})$,
i.e. the kernel of the orthogonal projection from $L^{2}(d\mu_k)$ to $H_{k}(M)$.
By the general results in \cite{bbw}
\begin{equation}\label{eq:asymptot of Bergman measure intro}
N_{k}^{-1}B_{k}(x)d\mu_k\rightarrow\mu_{eq}/\mu_{eq}(M)
\end{equation}
weakly on $M$ as $k\rightarrow\infty$, which in the view of the guiding
principle above thus gives a strong motivation for the previous theorems.
However, this guiding principle does not seem to hold in all generality and it
has to be complemented with some further asymptotic information of the full
reproducing kernel $K_{k}(x,y)$. This is already clear from Landau's classical
proof in the Paley-Wiener setting on $\R^{n}$ \cite{la}, where the decay
asymptotics of $K_{k}(x,y)$, away from the diagonal are needed in order to
construct functions $f_{k}$ which are well-localized on a given domain $\Omega$
(using suitable Toeplitz operators). Moreover, Landau's approach also relies on
certain submean inequalities for $f_{k}$ which pose difficulties in our general
setting. Instead we use a new approach to proving necessary conditions for
sampling, which is inspired by \cite{no} and \cite{lo}, where we reduce the
problem to establishing two asymptotic properties of $K_{k}(x,y)$ (given the
convergence of the Bergman function to the equilibrium measure):
\begin{itemize}
\item A growth property of $B_{k}$ 
\item A weak decay property of $\left|K_{k}(x,y)\right|$ away from the diagonal
\end{itemize}


The interpolation theorem~\ref{generalinterp} is proved in a similar way, but 
by replacing the moderate growth property with a Bernstein type inequality, see 
Theorem~\ref{Bernstein}.

One interesting feature is that although the statement of the problems studied 
are purely real, all the proofs rely on the process of complexification with 
one important exception: the off-diagonal estimate on the Bergman kernel 
(Theorem~\ref{prop:off-diagonal bergman kernel}) which exploits, in an essential 
way, the real structure.

\subsection{Further relations to previous results}

As explained above one important ingredient in the proof of 
Theorem~\ref{general} is the asymptotics for  $B_{k}(x)$ in formula 
\eqref{eq:asymptot of Bergman measure intro} established in \cite{bbw}, which 
holds generally under the Bernstein-Markov assumption on $(\mu,\phi)$. In turn, 
the latter result can be seen as a consequence of a very general  result in 
\cite{bbw} giving the convergence towards the equilibrium measure of 
$(\mu,\phi)$ for the normalized Dirac measure associated to a sequence of 
$N_{k}$ points under the condition that the points are asymptotic Fekete points 
for the weighted set $(\operatorname{Supp}(\mu),\phi)$. It may thus be tempting 
to try to deduce Theorem~\ref{general} in the present paper directly from the 
general convergence results in \cite{bbw} by the following tentative procedure: 
one removes points from a sampling sequence $\Lambda_{k}$ until one arrives at 
$N_{k}$ points, while keeping the sampling property. But as shown by a counter 
example in \cite[Example~2]{o-s} such a procedure is doomed to fail, 
already in 
the homogenouous case of the circle.

The point-wise asymptotics for $B_{k}(x)$ in Theorem~\ref{thm:sampling affine 
compact intro} (formula \eqref{eq:bounds on bergman function intro}) can be seen 
as an improvement - in the special case of a real algebraic manifold - of a 
classical result for regular compact subsets complex space going back to Siciak 
and Zaharyuta giving that $B_{k}(x)^{1/k}\rightarrow1$ point-wise on $M$ (this 
latter classical result was given a $\bar{\partial}-$proof by Demailly, 
\cite{demailly89} which can seen as a precursor to our proof). In view of the 
weak asymptotics \eqref{eq:asymptot of Bergman measure intro} and the bounds 
in formula \eqref{eq:inequalities for v_M in thm} below, it seems natural to 
conjecture that $B_{k}(x)/k^{n}$ in fact converges pointwise (in the almost 
everywhere sense) to the $L^{1}-$density of the equilibrium measure of $M$ (and 
similarly in the setting of a convex domain; as in the one dimensional setting, 
see \cite{totik}). This would be a real analog of the point-wise asymptotics for 
$B_{k}(x)$ in \cite{berman}, where the role of $M$ is played by a complex 
projective variety endowed with a hermitian holomorphic line bundle (the case of 
positive curvature is a fundamental result in complex geometry, due to Bouche 
\cite{bouche} and Tian \cite{tian}).

On the other hand, in the line bundle setting, the asymptotics for 
$\left|K_{k}(x,y)\right|^{2}$ in Theorem~\ref{prop:off-diagonal bergman kernel} 
are only known in the case of a line bundle with positive curvature, but it 
seems natural to expect that they hold in general (see \cite{berman} for some 
results in this direction in connection to the study of fluctuations of linear 
statistics of determinantal point processes).

Finally, we recall that in the one-dimensional case there is a vast
litterature on various asymptotic results for orthogonal polynomials,
in particular in connection to random matrix theory. For example,
the asymptotics of the scaled reproducing kernel 
$k^{-n}K_{k}(x+\frac{a}{k},x+\frac{b}{k})$
have been established, in connection to the question of universality, under 
very general condition on a given measure $\mu$ on the real
line, when $x$ is a fixed point in the ``bulk'' of the support
of $\mu$; see for example \cite{lubinsky} (and similar scaling result holds at 
the ``edge'' of the support). However, there seem to be very few results 
in the higher dimensional setting (but see \cite{xu}
and \cite{k-l}, where the case of the ball and the simplex in is
settled). It would be interesting to extend the asymptotics in the
present paper to study similar universality questions in higher dimensions
and we leave this as a challenging problem for the future.
\subsection*{Acknowldegements:}
The authors are grateful to Ahmed Zeriahi for very useful discussions
regarding the Bernstein inequality.

\section{Pluripotential theory and asymptotics of real orthogonal polynomials }

\subsection{Setup}\label{setup}

Let $M$ be an $n$-dimensional affine real algebraic variety, which
is non-singular and compact. In particular, $M$ is the common zero-locus
of a collection of real polynomials $p_{1},...,p_{r}$ in $\R^{m}$.
We denote by $H_{k}(M)$ the real vector space consisting of the functions
on $M$ which are restrictions of real polynomials in $\R^{m}$ of
total degree at most $k$. We will also consider the ``complexifications''
$X$ and $H_{k}(X)$ of the real variety $M$ and the real vector
space $H_{k}(M)$, respectively. More precisely, $X$ is the complex
algebraic variety in $\C^{m}$ defined by the common complex zeros of the ideal 
defining $M$
and $H_{k}(X)$ is the complex vector space consisting of restrictions
to $X$ of polynomials in $\C^{m}$ of total degree at most $k$.
Then $M$ is indeed the real part of $X$ in the sense that it consists
of all points in $z$ in $X$ such that $\bar{z}=z$ and real vector
space $H_{k}(M)$ is the the real part of $H_{k}(X)$ in the sense
that it consists of all $p_{k}$ in $H_{k}(X)$ such that $\overline{p_{k}}=p_{k}$
(restricted to $M$). Denoting by $\bar{X}$ the closure of $X$ in
$\P^{m}$, which defines a compact (possibly singular) complex projective
algebraic variety we will, in the usual way, identify $H_{k}(X)$
with the space $H^{0}(X,\mathcal{O}_{X}(1)^{\otimes k})$ of all global
holomorphic sections of the line bundle $\mathcal{O}_{X}(1)^{\otimes 
k}\rightarrow X$.

We will denote by $K_k(x,y)$ the Bergman reproducing kernel of $H_k$ equipped 
with the $L^2$-norm induced by a given weighed measure $(\mu,\phi)$
and moreover we will use the notation $B_k(x)=K_k(x,x)e^{-k\phi(x)}$.

\begin{example}
Let $M$ be the unit-circle realized as the zero-set in $\R^{2}$
of $p(x,y)=x^{2}+y^{2}-1$. Setting $x=\cos\theta$ for $\theta\in[0,2\pi]$
we may identify $H_{k}(M)$ with the space $H_{k}([0,2\pi])$ of all
Fourier series on $[0,2\pi]$ ``band-limited'' to $[-k,k]$ i.e.
spanned by $1$ and $\cos m\theta$ and $\sin m\theta$ for 
$m\in[\text{1},k]\cap\Z$.
More precisely, $H_{k}([0,2\pi])$ is the pull-back of $H_{k}(M)$
under the corresponding map from $[0,2\pi]$ to $M$. To see the relation
to the more standard setting where $M$ corresponds to the unit-circle
$S^{1}$ in $\C$ with complex coordinate $\tau$ (equal to $e^{i\theta}$
on $S^{1})$ we note that the embedding $F$ of $\C^{*}$ in $\C^{2}$
given by 
\[
z:=(\tau+\tau^{-1})/2\quad  w=(\tau-\tau^{-1})/2i
\]
maps $\C^{*}$ to the complex quadric $X$ cut out by $p(z,w):=z^{2}+w^{2}-1$
and the unit-circle $S^{1}$ in $\C^{*}$ is mapped to the real part
$M$ of $X$. Indeed, $\tau\in S^{1}$ if and only if $\bar{\tau}=\tau^{-1}$
iff $z(\tau)=\Re\tau(=\cos\theta)$ and $w(\tau)=\Im\tau(=\sin\theta)$.
The pull-back of $H_{k}(X)$ under $F$ is the space of Laurent polynomials
on $\C$ spanned by the monomials $\tau^{m}$ for $m\in[-k,k]\cap\Z$.
Hence, the real part of $F^{*}H_{k}(M)$ is indeed spanned by $1$
and $\cos m\theta(=\Re\tau^{m})$ and $\sin m\theta(=\Im\tau^{m})$
for $m\in[\text{1},k]\cap\Z$. 
\end{example}

\begin{definition}[Bernstein-Markov]\label{defBM}
The standard assumption on the pair ($\mu$, $\phi$) is that it satisfies the
Bernstein-Markov property (with respect to the support of $\mu$), which may be
formulated as the property that the reproducing kernel $K_{k}(x,y)$ have
sub-exponential growth on the diagonal, i.e. for any $\epsilon>0$ there exists a
positive constant $C_\epsilon$ such that 
\begin{equation}\label{BM}
B_{k}(x)\leq C_\epsilon e^{\epsilon k}
\end{equation}
uniformly on the support of $\mu$. 
\end{definition}
For our general results to hold we will also need the following technical
regularity of the growth assumption on the reproducing kernel along the
diagonal:
\begin{definition}[Moderate growth]\label{defMG}
We say that $H_k$ has a reproducing kernel with moderate growth if
\begin{equation}\label{MG}
 K_{k+1}(x,x)\leq C K_{k}(x,x)
\end{equation}
on the support of $\mu$. More precisely, for our purposes the constant $C$ may 
be
replaced by any sequence with growth of the order $o(k)$. 
\end{definition}
Anyway, in all examples that we are aware of the constant $C$ in \eqref{MG} may
actually be replaced by a sequence tending to one as $k\rightarrow\infty$
(which, by iteration, actually implies the Bernstein-Markov property
\eqref{BM}). All the measures $\mu$ supported in $M$ that we will consider are
\emph{non-degenerate}, in the sense that $\|f\|\ne 0$ if $0\ne f\in H_k(M)$. 
Otherwise
if the support of $\mu$ is contained in the zeros of a non vanishing polynomial
in $H_k(M)$ one may replace $M$ with a subvariety. 

While Definition~\ref{defBM} is standard (see \cite{bbw} and references 
therein), Definition~\ref{defMG} appears to be new. We expect it to hold in 
great generality and we will establish it in the situations relevant to the 
present paper.

\subsection{The extremal function attached to a real affine 
variety}\label{extremal}

Recall that the \emph{Lelong class} $\mathcal{L}(\C^{m})$ is the
convex space of all plurisubharmonic (psh, for short) functions $\phi$
on $\C^{m}$ with logarithmic growth, in the sense that 
$\phi\leq\log(1+|z|^{2})+C$.
The restriction of this space to $X$ will be denoted by $\mathcal{L}(X)$
and it may be identified with the space of all (singular) metrics
on the line bundle $\mathcal{O}(1)_{\bar{X}}\rightarrow\bar{X}$ with
positive curvature current, see \cite{bbw} and references therein for further 
background. The \emph{Siciak extremal function} (sometimes
called the equilibrium potential) of a compact and non-pluripolar
subset $K$ of $X$ and a weight $\psi\in \C(K)$ is the function in 
$\mathcal{L}(X)$ defined as
the upper semi-continuous regularization $v_{K}^{*}$ of the envelope
\[
v_{K,\psi}(x)=\sup_{\phi\in\mathcal{L}(X)}\left\{ \phi(x):\quad\phi\leq 
\psi\text{ on }K\right\} 
\]
and the weighted set $(K,\psi)$ is called \emph{regular} if 
$v_{K,\psi}^{*}=v_{K,\psi}$. The 
Monge-Amp\`ere
measure 
\[
\mu_{K,\psi}:=MA(v_{K,\psi})
\]
is called the\emph{ (pluripotential) equilibrium measure} of $(K,\psi)$
and it is supported on $K$ (when there is no risk of confusion we
will write $\mu_{K,\psi}:=\mu_{eq})$ 

In the following we will take $K:=M$ as above, which is thus embedded
in the complex affine variety $X$ in such a way that $M=X\cap\{y=0\}$,
where $y$ denotes the imaginary part of $z\in\C^{n}$. We will also take 
$\psi=0$. 
\begin{prop}
\label{thm:equil weight for M affine real smooth}Let $M$ be an $n$-dimensional
affine real algebraic variety, which is non-singular and compact and
denote by $v_{M}$ its extremal function, defined on the complexification
$X$ of $M$. Then $M$ is non-pluripolar and regular. Moreover, there
exists a constant $C$ such that 
\begin{equation}\label{eq:inequalities for v_M in thm}
\frac{1}{C}\left|y\right|\leq v_{M}\leq C\left|y\right|
\end{equation}
 in a neighborhood of $M$ in $X$. In particular, the equilibrium
measure $\mu_{M}$ is absolutely continuous with respect to the Lebesgue
measure $dV_{M}$ on $M$ and its density is bounded from above and
below by positive constants: 
\begin{equation}\label{eq:ineq for mu on M in thm}
\frac{1}{D}dV_{M}\leq\mu_{M}\leq DdV_{M}
\end{equation}
 on $M$. 
\end{prop}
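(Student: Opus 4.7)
The plan is to reduce the proposition to the two-sided Lipschitz sandwich \eqref{eq:inequalities for v_M in thm}; non-pluripolarity, regularity, and the density bounds \eqref{eq:ineq for mu on M in thm} will all follow. The key geometric input is that since $\dim_\R M=n=\dim_\C X$ and $M$ is smooth, the inclusion realizes $M$ as a maximally totally real compact submanifold of $X$; in particular $X$ is non-singular in a neighborhood of $M$, and one may cover $M$ by finitely many holomorphic charts $(U_\alpha,\zeta_\alpha=\xi_\alpha+i\eta_\alpha)$ of $X$ in which $M\cap U_\alpha=\{\eta_\alpha=0\}$, with $|y|$ comparable to $|\eta_\alpha|$ on $U_\alpha$ after shrinking. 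Non-pluripolarity of $M$ is then immediate from this local model, since a maximally totally real $C^1$ submanifold cannot lie in any pluripolar set of a complex manifold whose complex dimension equals the real dimension of the submanifold.

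For the upper bound $v_M\le C|y|$ I would apply a two-constants type subharmonic estimate on complex disks transverse to $M$. Any competitor $\phi\in\mathcal{L}(X)$ with $\phi\le 0$ on $M$ is bounded above by a constant $B$ on a fixed neighborhood of $M$, by the logarithmic growth defining $\mathcal{L}(X)$ together with the compactness of $M$. For $z\in U_\alpha$ with $\zeta_\alpha(z)=\xi+i\eta$ of small modulus, the complex disk $D=\zeta_\alpha^{-1}(\{\xi+s\eta/|\eta|:|s|\le r\})$ has its real diameter inside $M$, so the pulled-back $\phi$ is $\le 0$ on the diameter and $\le B$ on all of $D$; the two-constants theorem on the upper half-disk then yields $\phi(z)\le C'B|\eta|/r$ for $z$ close to $M$, and taking the supremum over $\phi$ gives $v_M^{*}\le C|y|$. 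Regularity $v_M^{*}=v_M$ then follows from the continuity of the resulting envelope.

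For the lower bound $v_M\ge|y|/C$, which I expect to be the main obstacle, I would use Siciak's polynomial representation
\[
v_M(z)=\sup_k\frac{1}{k}\log\sup\{|p_k(z)|:p_k\in H_k(X),\,\|p_k\|_{L^\infty(M)}\le 1\}
\]
(valid up to usc-regularization): the goal is to build, for each $k$, polynomials $p_k$ with $\|p_k\|_{L^\infty(M)}\le 1$ and $|p_k(z)|\ge e^{ck|y|}$ for $z$ near $M$. Locally in a chart where $M=\{\eta=0\}$, polynomials of the form $(1+i\alpha v\cdot(z-x_0))^k$ with $v\in\R^m$ chosen in the imaginary direction at $z_0$ produce the correct Taylor-level growth; the prototype is the unit circle $M=\{z^2+w^2=1\}\cap\R^2$, where $(z+iw)^k$ realizes the extremal function explicitly and recovers the classical $\log|\tau|$. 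The delicate step is globalizing the local construction so that the polynomial has controlled sup-norm on all of $M$; this relies essentially on $M$ being algebraic rather than merely totally real.

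Finally, the density estimates on $\mu_M=(dd^cv_M)^n$ follow from the sandwich $|y|/C\le v_M\le C|y|$ by a Bedford-Taylor computation. The model function $|y|$ on $\C^n$ has Monge-Amp\`ere measure equal to a positive constant multiple of Lebesgue measure on $\R^n$: its complex Hessian vanishes off $\R^n$, and the distributional $n$-fold wedge concentrates on $\R^n$ with constant density. Combining this with the two-sided comparison $v_M\asymp|y|$ and the continuity of the Bedford-Taylor operator on Lipschitz plurisubharmonic functions yields the corresponding two-sided comparison $\mu_M\asymp dV_M$ stated in \eqref{eq:ineq for mu on M in thm}.
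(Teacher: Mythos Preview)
You have the two directions reversed in difficulty, and there is a genuine gap in your upper-bound argument.

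\textbf{Lower bound.} This is the easy direction, and your polynomial route is far more work than needed. One simply exhibits a single competitor in $\mathcal L(X)$: after rescaling so that $|z|<1$ on $M$, set
\[
\psi(z)=\begin{cases}\max\{|y|/C,\ \log|z|^{2}\}&|z|\le 2,\\ \log|z|^{2}&|z|>2,\end{cases}
\]
with $C$ large enough to make the patching continuous. Then $\psi\in\mathcal L(X)$, $\psi\le 0$ on $M$, and $\psi=|y|/C$ near $M$; hence $v_M\ge |y|/C$ immediately. No Siciak representation, no globalization of local polynomials.

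\textbf{Upper bound.} Your two-constants argument on transverse disks is a sound idea, but the sentence ``any competitor $\phi\in\mathcal L(X)$ with $\phi\le 0$ on $M$ is bounded above by a constant $B$ on a fixed neighborhood of $M$, by the logarithmic growth'' is the gap. The growth condition $\phi\le \log(1+|z|^2)+C_\phi$ carries a constant $C_\phi$ that depends on $\phi$, so it gives no uniform $B$; without uniformity the sup over competitors is uncontrolled and the conclusion $v_M\le C|y|$ does not follow. What is actually needed is $v_M\in L^\infty_{\mathrm{loc}}(X)$, and this is where the algebraicity of $X$ enters: it is Sadullaev's theorem that the Siciak extremal function of a non-pluripolar compact in an algebraic variety is locally bounded. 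The paper invokes exactly this, and then deduces the Lipschitz estimate by a different mechanism: using the Guillemin--Stenzel Grauert-tube distance $d(\cdot,M)$ (which is psh with vanishing Monge--Amp\`ere off $M$), the function $g(r)=\sup_{d(z,M)=r}v_M(z)$ is convex in $r$ by Demailly's three-circles theorem, so $g(0)=0$ and $g(R)<\infty$ force $g(r)\le Cr$. Once you plug in Sadullaev to get a legitimate uniform $B$, your half-disk two-constants estimate is a valid alternative to this convexity argument.

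Your derivation of \eqref{eq:ineq for mu on M in thm} from \eqref{eq:inequalities for v_M in thm} via the Bedford--Taylor comparison principle matches the paper's.
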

\begin{proof}
The lower bound on $v_{K}$ follows from a simple max construction.
Indeed, we may after a scaling assume, as before, that $|z|<1$ on $M$ and then
set 
\[
\psi:=\begin{cases}\max\{|y|/C,\log|z|^{2}\}& \text{ if } |z|\le 2\\
       \log |z|^2 &\text{ if } |z|>2.
      \end{cases}
\]
with $C$ sufficiently large to ensure that $\psi$ is continuous
along $|z|=2$. Since, $\psi=|y|/C$
close to $M$ the function $\psi$ is a contender for the sup defining
$v_{M}$ and hence $\psi\leq v_{M}$, which proves the lower bound
in \eqref{eq:inequalities for v_M in thm}. The proof of the upper bound
is more involved: We know that $v_M\in L^\infty_{\text{loc}}(X)$ because $M$ is
algebraic, see \cite{sadullaev}. Moreover there is a distance $d$ such that
$g(r)=\sup_{z\in X: d(z,M)=r} v_M(z)$ is convex in $r$ in $[0,\delta]$, see the
proof of Theorem~\ref{tubular} for details. Therefore $g(r)\le C r$ and the
upper bound in \eqref{eq:inequalities for v_M in thm} follows.

Anyway, for the proof of the lower bound
in \eqref{eq:bounds on bergman function intro} we will only
need the lower bound in \eqref{eq:inequalities for v_M in thm}. Note
also that combining \eqref{eq:bounds on bergman function intro}
and the asymptotics \eqref{eq:asymptot of Bergman measure intro} immediately
gives the inequalities \eqref{eq:ineq for mu on M in thm}
(which also follow from the inequalities \eqref{eq:inequalities for v_M in thm}
by the comparison principle for the Monge-Ampere measure, see 
\cite[Lemma~2.1]{BedTay}.
\end{proof}

\subsection{The proof of the lower bound on \texorpdfstring{$B_{k}$}{Bk} in 
\texorpdfstring{\eqref{eq:bounds on bergman function intro}}{(\ref{eq:bounds on 
bergman function intro})}}

Denote by $B_{kv_{M}}$ the Bergman function on $X$ defined by the
$L^{2}$-norm on $H_{k}(X)$ induced by the
measure $e^{-kv_M}dV_{X}$. The idea of the proof is to first show that 
\begin{equation}\label{first step}\tag{i}
B_{kv_{M}}\geq Ck^{2n}\quad \text{on }M,
\end{equation}
and then that 
\begin{equation}\label{second step}\tag{ii}
B_{k}\geq Ck^{-n}B_{kv_{M}}\quad \text{on }M.
\end{equation}
This would clearly imply the result in question. However, for technical
reasons we will only show a slightly weaker version of these inequalities
(needed for \eqref{first step}) where $v_{M}$ is replaced by 
\[
v_{M}^{\epsilon}:=v_{M}(1-\epsilon)+\epsilon\psi
\]
 where $\psi$ is a continuous function in $\mathcal{L}(X)$ such
that $\psi=v_{M}^{2}/C$ in a neighborhood of $M$. Here $\epsilon$
is a sufficiently small positive number which is fixed once and for
all. To see that such a function $\psi$ exists we may after scaling
assume that $|z|<1$ on $M$ and then simply set $\psi:=\max\{v_{M}^{2}/C, 
\log|z|^{2}\}$ when $|z|<2$ and $\phi:=\log|z|^2$ when $|z|>2$. The constant
 $C$ is taken sufficiently large to ensure that $\psi$ is continuous at $|z|=2$.

Let us start with the proof of \eqref{first step}. To simplify the notation we
will assume that $n=1$ (but the general proof is essentially the
same). To this end fix a point in $M$ and introduce local holomorphic
coordinates $z$ on $U$ in $X$, centered a the fixed point, such
that $M=\{y=0\}$ locally, i.e. on $U$ (not to be confused with the
global coordinates on $\C^{m}$ and $\R^{n}$, respectively). The
idea is to first construct a local function $f_{k}$, holomorphic
on $U$ such that 
\begin{equation}\label{eq:local model bound}
\frac{\left|f_{k}(0)\right|^{2}}{\int_{U}\left|f_{k}\right|^{2}e^{-kv_{M}}dV}
\geq k^{2}/C
\end{equation}
and then perturb $f_{k}$ slightly to become a polynomial $p_{k}$
by solving a global $\bar{\partial}$-equation on $\bar{X}$ with
an $L^{2}$-estimate. 

There is no loss of generality assuming that $f_k(0)=1$. Working in a 
local coordinates and reescaling \eqref{eq:local model bound} it is enough to 
prove that there is a function $f\in \mathcal H(\C)$ such that $f(0)=1$ and
\[
 \int_{\mathbb C} |f(z)|^2 e^{-C|\Im z|} < \infty,
\]
then $f_k(z)=f(kz)$ will satisfy \eqref{eq:local model bound}. The 
function $f(z)= \sinc^{2}(C z/2)$ has the desired properties.

\subsubsection{Modification and globalization}

Let now $\chi$ be a smooth cut-off function supported on $U$ (say
equal to one on $U/2)$. In view of standard globalization arguments
the problem with the present setting is that $\bar{\partial}$ of
the global function $\chi f_{k}$ on $X$ does not have a small weighted
$L^{2}$- norm (compared to the weighted norm of $f_{k})$. The reason
is that $e^{-k2|y|}$ is only well localized in the $y$-direction.
To bypass this difficulty we will instead replace $v_{M}$ with 
$v_{M}^{\epsilon}$
and modify $f_{k}$ accordingly as follows. First observe that, by
definition, 
\[
\int_{U}\left|g_{k}\right|^{2}e^{-kv_{M}^{\epsilon}}dV\leq\int_{U} 
\left|g_{k}\right|^{2}e^{-kv_{M}(1-\epsilon)}e^{-k\epsilon4|y|^{2}}dV
\]
for any $g_{k}$. We next observe that 
$\left|e^{z^{2}}\right|^{2}e^{-4|y|^{2}}=e^{-2|z|^{2}}$
and hence setting $g_{k}:=f_{k}e^{k\epsilon z^{2}}$ gives 
\[
\left|g_{k}\right|^{2}e^{-k4\epsilon|y|^{2}}=\left|f_{k}\right|^{2}e^{
-2k\epsilon|z|^{2}}\leq\left|f_{k}\right|^{2}
\]
 and $g_{k}(0)=f_{k}(0)$. In particular, 
\[
\frac{\left|g_{k}(0)\right|^{2}}{\int_{U}\left|g_{k}\right|^{2}e^{-kv_{M}^{
\epsilon}}dV}\geq\frac{\left|f_{k}(0)\right|^{2}}{\int_{U}\left|f_{k}\right|^{2}
e^{-k(1-\epsilon)v_{M}}dV}\geq k^{2}/C_{\epsilon}
\]
Here the optimal constant $C_{\epsilon}$ is slightly smaller than
the previous optimal $C$, but on the other hand we have gained a
Gaussian factor that we will next exploit. The point is that 
$\bar{\partial}(\chi g_{k})=\bar{\partial}\chi g_{k}$
is supported where $|z|>1/4$ and hence 
\[
\begin{split}
\int_{U}\left|\bar{\partial}(\chi g_{k})\right|^{2}e^{-kv_{M}^{\epsilon}}dV\leq 
C\int_{1/4\leq|z|\leq2}\left|g_{k}\right|^{2}e^{-kv_{M}(1-\epsilon)}e^{
-k\epsilon4|y|^{2}}dV=\\
C\int_{1/4\leq|z|\leq2}\left|f_{k}\right|^{2}e^{-kv_{M}
(1-\epsilon)}e^{-k\epsilon2|z|^{2}}dV
\end{split}
\]
Estimating the Gaussian factor $e^{-k\epsilon2|z|^{2}}$ with its
sup, i.e with $e^{-k2\epsilon/4^{2}}$ thus gives the bound 
\[
\int_{U}\left|\bar{\partial}(\chi g_{k})\right|^{2}e^{-kv_{M}^{\epsilon}}dV\leq 
O(e^{-\delta k})\int_{U}\left|f_{k}\right|^{2}e^{-kv_{M}(1-\epsilon)}
\]
Here and henceforth $O(e^{-\delta k})$ denotes a term which is exponentially
small in $k$ (recall that that $\epsilon$ is a small number which
is fixed once and for all). 

With this local estimate in place we can now apply a standard globalization
argument: using $L^{2}$-estimates for $\bar{\partial}$ on the line
bundle $\mathcal{O}(1)_{X}$ over $\bar{X}$, or more precisely (if
the latter variety is singular) on its pull-back to a smooth resolution
of $\bar{X})$ there exists a smooth function $u_{k}$ on such that
$p_{k}:=g_{k}-u_{k}$ is in $H_{k}(X)$ and 
\[
\bar{\partial}u_{k}=\bar{\partial}(\chi 
g_{k}),\quad \int_{X}\left|u_{k}\right|^{2}e^{-kv_{M}^{\epsilon}}dV\leq 
C\int_{U}\left|\bar{\partial}(\chi g_{k})\right|^{2}e^{-kv_{M}^{\epsilon}}dV
\]
(strictly speaking to apply $L^{2}$-estimates we have to slightly
modify the weight $v_{M}^{\epsilon}$ with a
$k$-independent term to ensure that the corresponding metric on the
line bundle $k\mathcal{O}(1)_{X}$ has a sufficiently large uniform
lower bound on its curvature form, but this only changes the $L^{2}$-estimates
with an overall multiplicative constant, which is harmless).  This is a standard 
procedure; for a precise statement which also applies in the singular setting 
see, for example \cite[Section~2]{berman2}.

By the previous estimate this means that 
\[
\int_{X}\left|u_{k}\right|^{2}e^{-kv_{M}^{\epsilon}}dV\leq O(e^{-\delta 
k})\int_{U}\left|f_{k}\right|^{2}e^{-kv_{M}(1-\epsilon)}
\]
Moreover, applying the mean value property for holomorphic functions
on a small coordinate ball then gives 
\[
u_{k}(0)\leq Ck^{2}\int_{X}\left|u_{k}\right|^{2}e^{-kv_{M}^{\epsilon}}dV\leq 
O(e^{-\delta k})\int_{U}\left|f_{k}\right|^{2}e^{-kv_{M}(1-\epsilon)}
\]
Hence, 
\[
\begin{split}
\frac{\left|p_{k}(0)\right|^{2}}{\int_{X}\left|p_{k}\right|^{2}e^{-kv_{M}^{
\epsilon}}dV}=\frac{\left|g_{k}(0)-u_{k}(0)\right|^{2}}{\int_{X}\left|\chi 
g_{k}-u_{k}\right|^{2}e^{-kv_{M}^{\epsilon}}dV}\geq\\
\frac{\left|g_{k}(0)\right|^{
2}-O(e^{-\delta 
k})\int_{U}\left|f_{k}\right|^{2}e^{-kv_{M}(1-\epsilon)}}{\int_{U}\left|\chi 
g_{k}\right|^{2}e^{-kv_{M}^{\epsilon}}dV+O(e^{-\delta 
k})\int_{U}\left|f_{k}\right|^{2}e^{-kv_{M}(1-\epsilon)}}
\end{split}
\]
But $\left|g_{k}(0)\right|^{2}=\left|f_{k}(0)\right|^{2}$ and 
$\left|g_{k}\right|^2e^{-kv_{M}^{\epsilon}}\leq\left|f_{k}\right|^{2}e^{-kv_{M}
(1-\epsilon)}$.
Moreover, as explained above 
$\int_{U}\left|f_{k}\right|^{2}e^{-kv_{M}|(1-\epsilon)}=O(k^{2})$
and hence we get just as above 
\[
\frac{\left|p_{k}(0)\right|^{2}}{\int_{X}\left|p_{k}\right|^{2}e^{-kv_{M}^{
\epsilon}}dV}\geq C_{\epsilon}k^{2}(1+O(e^{-\delta k}))
\]
 which concludes the proof of the bound \eqref{first step}
\[
B_{kv_{M}^{\epsilon}}\geq Ck^{2n}\quad \mbox{on } M.
\]

\subsubsection{The inequality between $B_{k}$ and $B_{kv_{M}}$}

First observe that it is enough to prove the following lemma where
now $y$ denotes the imaginary part of $z\in\C^{n}$ (so that $X\cap\{y=0\}=M\}):$
\begin{lem}
\label{lem:comparison of l2 norms for polyn}Let $U_{k}$ be the set
of all points in $X$ such that $|y|\leq1/k$ (which defines a neighborhood
of $M$ in $X)$. Then there exists a constant $C$ such that 
\[
\int_{M}|p_{k}|^{2}dV\leq C\frac{1}{Vol(U_{k})}\int_{U_{k}}|p_{k}|^{2}dV
\]
 for any polynomial of total degree at most $k$.
\end{lem}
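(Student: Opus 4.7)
My plan is to combine the plurisubharmonic sub-mean value inequality on $X$ with a Fubini swap and a geometric estimate for the volume of a tube of radius $1/k$ around $M$.

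First I would fix a finite cover of the compact manifold $M$ by holomorphic charts on $X$ of the type used in Section~\ref{setup}, in each of which one has coordinates $z = x + iy$ with $M = \{y=0\}$ locally, the induced volume form $dV_X$ is comparable to Lebesgue in $(x,y)$, and the ambient Euclidean $|y|$ inherited from $\C^m$ is comparable to the local coordinate imaginary part; the comparison constants are uniform by compactness and non-singularity of $M$. For $k$ large, each Euclidean ball $B(z_0,1/k) \subset X$ centred at a point $z_0\in M$ is contained in one such chart and in a fixed enlargement of $U_k$.

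Next, since $p_k$ is the restriction to $X$ of a holomorphic polynomial on $\C^m$, the function $|p_k|^2$ is plurisubharmonic on $X$. The standard sub-mean value inequality applied in the chart yields, for each $z_0 \in M$,
\[
|p_k(z_0)|^2 \;\leq\; \frac{C_1}{\mathrm{Vol}(B(z_0,1/k))} \int_{B(z_0,1/k)} |p_k|^2\, dV_X \;\leq\; C_2\, k^{2n} \int_{B(z_0,1/k)} |p_k|^2\, dV_X,
\]
with $C_2$ independent of $z_0$ and $k$ by uniformity. Integrating against $dV_M(z_0)$ and exchanging the order of integration gives
\[
\int_M |p_k|^2\, dV_M \;\leq\; C_2\, k^{2n} \int_X |p_k(z)|^2\, w_k(z)\, dV_X(z),
\]
where $w_k(z) := \mathrm{Vol}_M\bigl(\{z_0 \in M : |z - z_0| \leq 1/k\}\bigr)$. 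The weight $w_k$ is supported in a fixed enlargement of $U_k$, and there it is bounded by the volume of a geodesic $1/k$-ball in the $n$-dimensional manifold $M$, hence $w_k(z) \leq C_3\, k^{-n}$. Together with the estimate $\mathrm{Vol}(U_k) \asymp k^{-n}$, which follows from the local product description of the tube around the compact $M$ in the coordinates above, this yields
\[
\int_M |p_k|^2\, dV_M \;\leq\; C\, k^{n} \int_{U_k} |p_k|^2\, dV_X \;\leq\; \frac{C'}{\mathrm{Vol}(U_k)} \int_{U_k} |p_k|^2\, dV_X,
\]
as claimed, after absorbing the harmless enlargement of $U_k$ into the constant.

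The main delicacy is the uniformity of the sub-mean value constant $C_2$ as $z_0$ ranges over $M$: one must ensure a single scale at which coordinate balls of radius $1/k$ stay in a common smooth tubular neighborhood of $M$, the metric on $X$ is comparable to the flat one, and the ambient and intrinsic imaginary parts are equivalent. This is precisely where the hypothesis that $M$ be compact and non-singular enters the argument, via a finite cover of $M$ by the charts of the type constructed above. The hypothesis $\deg p_k \leq k$ plays no direct role in the proof; it is only the matching of scales $r = 1/k$ in the sub-mean value with the tube thickness $1/k$ that is natural for the intended applications.
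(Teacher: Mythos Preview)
Your proof is correct and follows essentially the same route as the paper: apply the sub-mean value inequality at scale $1/k$ for each point of $M$, integrate over $M$, swap the order of integration to produce the weight $w_k(z)=\mathrm{Vol}_M(B(z,1/k)\cap M)\lesssim k^{-n}$, and finish with $\mathrm{Vol}(U_k)\asymp k^{-n}$. One small simplification: if you use the ambient Euclidean ball $B(z_0,1/k)\subset\C^m$ intersected with $X$, then for $z_0\in M$ this ball is automatically contained in $U_k$ (since $|y_z|\le|z-z_0|$), so no ``enlargement of $U_k$'' is needed and the absorption step can be dropped.
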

Indeed, since the function $v_{M}$ on $X$ is comparable to $|y|$
close to $M$ (by Theorem~\ref{thm:equil weight for M affine real smooth})
and in particular $kv_M^\epsilon$ is uniformly bounded on $U_{k}$, we then get 
that
\[
\int_{M}|p_{k}|^{2}dV\leq 
C'\frac{1}{k^{n}}\int_{X}|p_{k}|^{2}e^{-kv_{M}^{\epsilon}}dV
\]
It follows immediately that 
\[
B_{k}\geq k^{n}/C'B_{kv_{M}^{\epsilon}}
\]
on $M$, which combined with the inequality \eqref{first step} thus concludes the
proof of the lower bound in \eqref{eq:bounds on bergman function intro},
given Lemma~\ref{lem:comparison of l2 norms for polyn}, to whose
proof we next turn.

For any $x\in M$ there are constants $C$ and $r_0$ such that for any $r<r_0$,
\[
 |f(x)|^2 \le \frac C{r^{2n}}\int_{X\cap B(x,r)} |f(y)|^2\, dV_X(y). 
\]
for any $f$ holomorphic in $X$. In particular if we integrate over $x\in X$ a
polynomial of degree $k$ taking $r=1/k$ we get
\[
\begin{split}
 \int_M |p_k|^2dV_M \le Ck^{2n} \int_{U_k} |p_k(y)|^2 V_M(B(y,1/k)\cap M)\,
dV_X(y)\\
\le C k^n\int_{U_k} |p_k|^2dV_X\le \frac{C}{Vol(U_k)}\int_{U_k}|p_k|^2dV_X.\qed
\end{split}
\]

\subsection{The \texorpdfstring{$L^q$}{Lq} Bernstein inequality}

Let $M$ be a smooth compact algebraic variety in $\R^m$ of dimension $n$. 

Given a polynomial $p\in \mathcal P_k(\mathbb R^m)$  and $x\in M$ we denote by
$\nabla_t p(x)$ the tangential gradient of $p$ along the manifold $M$. The
following Bernstein type inequality holds:
\begin{thm}\label{Bernstein}
Let $q\in [1,\infty]$, then there is a constant $C_q$ such that
\[
 \|\nabla_t p\|_{L^q(M)} \le C_q \deg(p) \|p\|_{L^q(M)}. 
\]
\end{thm}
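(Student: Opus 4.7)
My plan is to pass to the complexification $X$ of $M$ and exploit the bound $v_M(z) \le C|y|$ near $M$ from Proposition~\ref{thm:equil weight for M affine real smooth}, where $y := \operatorname{Im}(z)$. Combining this with the Bernstein--Walsh inequality $|p(z)| \le e^{k v_M(z)} \|p\|_{L^\infty(M)}$ yields a uniform bound on $|p|$ over the tube $U_{1/k} := \{z \in X : |y| \le 1/k\}$, and a Cauchy estimate at the scale $1/k$ will produce the desired factor of $k = \deg(p)$.

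I would handle the case $q = \infty$ first, directly. Given $x_0 \in M$ and a unit tangent $v \in T_{x_0} M$, the real-analyticity of $M$ produces a holomorphic curve $\gamma : D_\delta \to X$ with $\gamma(0) = x_0$, $\gamma'(0) = v$, and $\gamma(D_\delta \cap \R) \subset M$. Setting $g(w) := p(\gamma(w))$, one has $g'(0) = D_v p(x_0)$, and the tube bound forces $|g(w)| \le C \|p\|_{L^\infty(M)}$ for $|w| \le c/k$ (since $|\operatorname{Im} \gamma(w)| \lesssim |w|$). Cauchy's inequality on the disc of radius $1/k$ then gives $|D_v p(x_0)| \le Ck \|p\|_{L^\infty(M)}$, which is the case $q=\infty$.

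For finite $q$, I would instead apply the submean-value inequality for the plurisubharmonic function $|p|^q$ (to its complex derivative) on the ball $B_X(x_0, 1/k)$, obtaining
\[
|\nabla_t p(x_0)|^q \le Ck^{2n+q} \int_{B_X(x_0, 1/k)} |p|^q \, dV_X.
\]
Integrating over $x_0 \in M$ and using Fubini together with $V_M(M \cap B_X(z, 1/k)) \le C k^{-n}$ yields
\[
\int_M |\nabla_t p|^q \, dV_M \le Ck^{n+q} \int_{U_{1/k}} |p|^q \, dV_X.
\]
Combining with the reverse tube estimate
\[
\int_{U_{1/k}} |p|^q \, dV_X \le \frac{C}{k^n} \int_M |p|^q \, dV_M
\]
then completes the proof.

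The main obstacle is the reverse tube estimate. Unlike Lemma~\ref{lem:comparison of l2 norms for polyn}, it cannot be obtained from the submean-value alone, which naturally runs from a point to an enclosing ball, not the other way. My approach is to parametrize $U_{1/k}$ via the normal exponential map $(x_0, \nu) \mapsto \exp_{x_0}(\nu)$ with $\nu \in N_{x_0} \cong i T_{x_0} M$, $|\nu| \le 1/k$, and for each fixed $x_0$ to bound the integral of $|p|^q$ along the normal fiber by a local $L^q$-average of $|p|^q$ over $B_M(x_0, c/k)$. This local estimate would follow from a one-dimensional Poisson-type identity on a complex disc through $x_0$ transverse to $M$, exploiting the reality of $p$ (so that the disc carries real-valued boundary data along a piece of $M$) together with the Bernstein--Walsh growth control at scale $1/k$ in the normal direction to dominate the remaining boundary contributions. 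A final Fubini argument turns the pointwise estimate into the reverse tube estimate.
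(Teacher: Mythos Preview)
Your reduction to the reverse tube estimate is correct and is exactly the route the paper takes: the paper proves the $q=\infty$ case by citing Bos--Levenberg--Milman--Taylor (your argument for $q=\infty$ is essentially theirs), then for $q=1$ uses the Cauchy inequality and Fubini to reduce to
\[
\int_{U_{1/k}}|p_k|\,dV_X \le \frac{C}{k^n}\int_M |p_k|\,dV_M,
\]
which is their Theorem~\ref{tubular}, and finally interpolates. So the architecture of your proof matches the paper; the whole content lies in the reverse tube estimate.

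Here is the gap. Your sketch for that estimate does not work as stated. A complex disc through $x_0$ that is genuinely transverse to $M$ meets $M$ only at $x_0$, so there is no ``piece of $M$'' carrying boundary data. If instead you mean a disc in a complexified tangent direction, then $M$ meets the disc along its real diameter, not along its boundary circle; any Poisson representation on the disc (or on a half-disc) will therefore involve values of $p$ on an arc lying in $U_{c/k}\setminus M$, and the harmonic measure of that arc from points at height $|y|\sim 1/k$ is bounded below, not negligible. Bernstein--Walsh controls those off-$M$ values only by $\|p\|_{L^\infty(M)}$, and you have no mechanism to pass from $\|p\|_{L^\infty(M)}$ back to $\|p\|_{L^q(M)}$ without an exponential loss $C^k$, which your argument cannot absorb.

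The paper supplies precisely that absorption mechanism, and it is the main idea you are missing. One takes the Guillemin--Stenzel exhaustion $\psi(z)=d(z,M)$ of a Grauert tube, which satisfies $(dd^c\psi)^n=0$ off $M$, and invokes Demailly's Hadamard-three-circles theorem: the function
\[
u(r)=\log\int_{\{\psi=r\}}|p_k|\,d\mu_r
\]
is \emph{convex and increasing} in $r$. At a fixed outer radius $R$ one does use Bernstein--Walsh together with a weak $L^\infty\!\to\!L^1$ Bernstein--Markov inequality (Brudny), incurring a constant $C^k$; but convexity on $[0,R]$ gives
\[
u(1/k)\le u(0)+\tfrac{1}{kR}\bigl(u(R)-u(0)\bigr)\le u(0)+O(1),
\]
so the factor $1/k$ kills the $k$ coming from $C^k$. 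Disintegrating the volume form of the tube along the level sets $\{\psi=r\}$ then yields the reverse tube estimate. Without this convexity (or an equivalent interpolation device), your fiberwise Poisson-type argument cannot close.
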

The case $q=\infty$ was proved in \cite{BosLevMilTay}. We prove now the case
$q=1$ and the others follow by interpolation.

Let
$X$ be a complexification of $M$, i.e, an algebraic variety in $\C^m$ such that
$M=X\cap \R^m$. 
We denote by $U_r\subset X$ the neighborhood of $M$ defined as $U_r=\{x\in
X: d(x,M) < r\}$. By the Cauchy inequalities we have that for any $x\in M$ and
any $f\in \mathcal H(X)$:
\[
 |\nabla_t f(x)|\lesssim \frac 1{r^{2n+1}} \int_{B(x,r)} |f(y)|dV_X(y).
\]
and integrating over $M$ we have
\[
 \int_M |\nabla_t f(x)|dV_M \lesssim \frac 1{r^{2n+1}} \int_M
\int_{B(x,r)}\!\!\!
|f(y)|dV_X(y)dV_M(x)\lesssim \frac 1{r^{n+1}}\int_{U_r} |f(y)|dV_X(y).
\]
Therefore Theorem~\ref{Bernstein} follows from the following
result:
\begin{thm}\label{tubular}
 There is $C>0$ such that for all polynomials $p_k$ of degree $k$, the
following inequality holds:
\[
 \int_{U_{1/k}} |p_k|dV_X \le C k^{-n} \int_{M}|p_k|dV_M.
\]
\end{thm}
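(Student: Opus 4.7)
The plan is to use the reproducing kernel of $H_k(M)$ equipped with the $L^2(M,dV_M)$ inner product together with the off-diagonal decay estimate for this kernel provided by Theorem~\ref{prop:off-diagonal bergman kernel}.

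First I would extend the reproducing identity off of $M$. For $p_k\in H_k(M)$ and $x\in M$ we have $p_k(x)=\int_M K_k(x,w)\,p_k(w)\,dV_M(w)$; since both sides are polynomials in $x$ (the right-hand side being a finite linear combination of an orthonormal basis $\{p_i\}$), the identity continues analytically in the first variable to
\[
p_k(z)=\int_M K_k(z,w)\,p_k(w)\,dV_M(w),\qquad z\in X,
\]
where $K_k(z,w)$ denotes the holomorphic extension of $K_k$ in its first slot. Taking absolute values, integrating over $z\in U_{1/k}$, and using Fubini gives
\[
\int_{U_{1/k}} |p_k(z)|\,dV_X(z)\le \int_M |p_k(w)|\cdot I(w)\,dV_M(w),\qquad I(w):=\int_{U_{1/k}}|K_k(z,w)|\,dV_X(z).
\]
It therefore suffices to establish the uniform bound $\sup_{w\in M} I(w)\le C k^{-n}$.

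Second, I would invoke the off-diagonal decay of the Bergman kernel in the form
\[
|K_k(z,w)|\le \frac{C\,k^n}{(1+k|z-w|)^{n+1}},\qquad w\in M,\ z\in U_{1/k}.
\]
In local holomorphic coordinates in which $M=\{y=0\}$, parametrize $U_{1/k}$ by $z=x+iy$ with $x\in M$ and $|y|<c/k$, so that $|z-w|$ controls both $|x-w|$ and $|y|$. The change of variables $u=ky$ yields
\[
\int_{|y|<c/k}\frac{dy}{(1+k|x-w|+k|y|)^{n+1}}\le \frac{C\,k^{-n}}{(1+k|x-w|)^{n+1}},
\]
and then $v=k(x-w)$ gives
\[
I(w)\le C\int_M \frac{dV_M(x)}{(1+k|x-w|)^{n+1}}\le C'\,k^{-n}\int_{\R^n}\frac{dv}{(1+|v|)^{n+1}}\le C''k^{-n},
\]
since the last integral converges because $n+1>n$. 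This estimate is uniform in $w\in M$, so combining with the first step yields the desired conclusion.

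The main obstacle is the off-diagonal decay of $K_k$ itself. As indicated in the introduction, this estimate crucially exploits the real structure of $M$ and is expected to be the most technically involved ingredient; once it is in hand, the remainder of the proof of Theorem~\ref{tubular} reduces to Fubini together with an elementary scaling argument. An alternative but equivalent route would be to prove the weighted bound $\int_X |p_k|\,e^{-k v_M}\,dV_X\le Ck^{-n}\int_M|p_k|\,dV_M$, using $v_M\asymp|y|$ (Proposition~\ref{thm:equil weight for M affine real smooth}) to control the weight on $U_{1/k}$; this recasting places the problem in the framework of $L^1$ asymptotics of weighted Bergman spaces but ultimately requires the same off-diagonal input.
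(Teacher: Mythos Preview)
Your reduction via the reproducing identity and Fubini is clean, but the proposal has a genuine gap: the pointwise off-diagonal bound
\[
|K_k(z,w)|\le \frac{Ck^n}{(1+k|z-w|)^{n+1}},\qquad z\in U_{1/k},\ w\in M,
\]
is not what Theorem~\ref{prop:off-diagonal bergman kernel} gives. That theorem only yields the averaged $L^2$ statement
\[
\frac{1}{k^n}\int_{M\times M}|K_k(x,y)|^2|x-y|^2\,d\mu_k(x)\,d\mu_k(y)\le \frac{C}{k},
\]
with both variables on $M$; it provides no pointwise control, no decay rate of order $(1+k|z-w|)^{-(n+1)}$, and nothing about the holomorphic extension in the first variable to $U_{1/k}$. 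A strong pointwise Christoffel--Darboux-type decay of this form on a general compact real algebraic manifold is not established anywhere in the paper and, to our knowledge, is an open problem comparable in difficulty to Theorem~\ref{tubular} itself.

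Worse, your scheme is circular within the paper's logic. Even the diagonal case of your claimed bound, $K_k(w,w)\le Ck^n$, is obtained in the paper \emph{as a consequence} of Theorem~\ref{tubular} (see the Remark following its proof: the submean property on $B_X(x,1/k)$ together with Theorem~\ref{tubular} gives $K_k(x,x)\lesssim k^n$). So the input you need already presupposes the conclusion. The paper's actual proof avoids kernel asymptotics entirely: it uses the Guillemin--Stenzel exhaustion $\psi=d(\cdot,M)$ with $(dd^c\psi)^n=0$ off $M$, Demailly's three-circle convexity for $r\mapsto\log\mu_r(|p_k|)$ on the pseudospheres $S(r)$, and the Bernstein--Walsh/Sadullaev bound $\sup_{S_R}|p_k|\le C^k\sup_M|p_k|$ together with a crude $\sup_M|p_k|\le C^k\|p_k\|_{L^1(M)}$. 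This pluripotential route is what makes the argument go through without any fine information on $K_k$.
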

\begin{proof}
In order to estimate the integral over $U_{1/k}$ we will integrate along
surfaces surrounding $M$. These surfaces will be level sets of plurisubharmonic
functions with Monge-Ampere 0. In this setting there is a generalization of
Hadamard three circles theorem due to Demailly that will be used, see
\cite{demailly}. In order to use this technique we need that the psh-function
that defines the level sets is smooth out of $M$, and its square must be
smooth. We can use the function provided by Guillemin and Stenzel in
\cite{guillemin} in their study of Grauert tubular neighbourhoods around real
analytic manifolds. We present the setting:

 Take $\psi$ a plurisubharmonic function in a neighborhood $U$ of $M$ in $X$
defined as 
\[
 \psi(z)=d(z,M),
\]
where the distance $d$ is given by a metric provided in a Grauert tubular
neighborhood $U$ as in \cite{guillemin}. The function $\psi$ satisfies
$(dd^c\psi)^n=0$ in $U\setminus M$, $\psi^2$ is a real analytic K\"ahler
potential in $U$ and $(dd^c(\psi^2))^n$ is comparable to the volume form in $X$
in a neighborhood of $M$.

We use the same notation as in \cite{demailly}. Consider the pseudospheres
$S(r)=\{z\in U;\psi(z)=r$, $r>0$ and the positive measures $\mu_r$ supported on
$S_r$ that are defined as 
\[
 \mu_r(h):= \int_{S(r)} h (dd^c\psi)^{n-1}\wedge d^c\psi
\]
for any $h\in C(U)$. When $r=0$, then we define
\[
 \bar\mu_0(h):=\int_{M} h (dd^c \psi)^n.
\]
We have that $\mu_r(h)$ is continuous for $r> 0$ with $\mu_r(h)\to\bar\mu_0(h)$
as $r\to 0^+$ see \cite[Theorem~3.2]{demailly}.

Moreover $(dd^c \psi)^n$ which is supported on $M$ is comparable to the volume 
form in $M$. This is so because at any point $z\in M$ we can write local 
holomorphic coordinates such that $M$ corresponds to ${z\in \mathbb C^n: \Im 
z=0}$. In this coordinates $\psi(z)$ is comparable to $|\Im z|$ and since $(dd^c 
|\Im z|)^n$ is the Lebesgue measure on $\mathbb R^n$ then by the comparison 
principle for the Monge-Ampere measure, see \cite[Lemma~2.1]{BedTay}, the 
measure $\bar\mu_0$ is locally comparable to the volume form and $M$ being 
compact it is globally comparable.

Take the psh function $V=\log|p_k|$, then 
\cite[Corollary~6.6(a)]{demailly} says that the function 
\[
 u(r)=\log \mu_r(e^V), \quad r>0, \quad u(0)=\log \bar\mu_0(e^V)
\]
is convex and increasing in $r$. We fix $R>0$ such that $S_R$ belongs to the
tubular neighborhood $U$. The convexity of $u$ implies that for any $r>0$
\begin{equation}\label{convex}
 u(r)\le u(0) \frac{R-r}{R} + u(R)\frac {r}R.
\end{equation}
We have that
\[
 u(0)=\log \int_{M} |p_k|\, d\bar\mu_0.
\]
We are going fix $R$ such and estimate $u(R)$. Since $p_k$ is a
polynomial of degree $k$ we have that by the Bernstein-Walsh estimate
\[
 \sup_{S_R} |p_k| \le  \sup_{S_R} e^{k\phi_M(z)} \sup_{M}|p|,
\]
where $\phi_M$ is the Siciak extremal function defined as
\[
\phi_M(z)=\max\Bigl\{0,\sup \{\frac 1{\deg(p)}\log |p(z)|:\ p\in \mathcal
P_k(\mathbb C^m), \deg(p)>0, \sup_M |p|\le 1\}\Bigr\}
\]

It is a well-known theorem of Sadullaev, see \cite{sadullaev} that if $X$ is
algebraic then $\phi_K\in L^\infty_{loc}(X)$ for any non-pluripolar compact set
$K$ relative to $X$. Certainly $M$ is non-pluripolar relative to $X$ since it is
totally real. Therefore:
\[
 \sup_{S_R} |p_k| \le  C^k \sup_{M}|p|
\]
Moreover, we need the following Bernstein-Markov type inequality:
\begin{equation}\label{bmlight}
 \sup_M |p| \le C_M^k \int_M |p|.
\end{equation}
This is easier than the standard Bernstein-Markov property since we are not
requiring that $C_M$ is close to 1. In our case \eqref{bmlight} is a
special case of \cite[Theorem~4.1]{brudny}.

Finally $\sup_{S_R} |p_k|\le C^k  \int_M |p|$, and we have that 
$u(R)\le (C+\log\|p\|_{L^1(M)})k$.

Therefore if $r=1/k$ and using the convexity \eqref{convex}  we deduce that
\[
 u(1/k)\le u(0) + C,
\]
Since $u(r)$ is increasing we have that  for any $r<1/k$
\[
\int_{S_r} |p_k| d\mu_r \le C  \int_{M} |p_k| d\bar\mu_0.
\]

But the measures $\mu_r$ desintegrate the form $(dd^c\psi)^{n-1}\wedge
d\psi\wedge d^c\psi$, see \cite[Proposition~3.9]{demailly}, and we 
have that
\[
 \int_{0}^{1/k}r^{n-1}\int_{S(r)} |p_k| d\mu_r =
 \int_{\psi<{1/k}} |p_k(z)| \psi(z)^{n-1}  (dd^c\psi)^{n-1}\wedge d\psi\wedge
d^c\psi 
\]
Moreover 
\[
\psi(z)^{n-1}  (dd^c\psi)^{n-1}\wedge d\psi\wedge d^c\psi=(dd^c(\psi^2))^n
\]
But $\psi^2$ is a real analytic K\"ahler potential in $X$, see \cite{guillemin}.
Thus $(dd^c(\psi^2))^n$ is equivalent to the original volume form $V_X$ in a
neighborhood of $M$ in $X$:

\begin{equation*}
 \int_{\psi<{1/k}} |p_k|dV_X \simeq \int_{0}^{1/k}r^{n-1}\int_{S(r)} |p_k|
d\mu_r
\end{equation*}
\end{proof}
\begin{remark}
With the same proof, for any $1\le q <\infty$,
\[
 \int_{U_{1/k}} |p_k|^q dV_X \lesssim k^{-n} \int_{M}|p_k|^q dV_M.
\]
It is also true that
\[
 \sup_{U_{1/k}}|p_k| \lesssim \sup_{M} |p_k|.
\]
The proof is the same, but instead of 
\cite[Corollary~6.6(a)]{demailly} one uses 
that
\[
 u(r)=\sup_{S_r} \log |p_k|,
\]
is a convex function of $r$, see 
\cite[Corollary~6.6(b)]{demailly}.
\end{remark}
\begin{remark}
For any $x\in M$ we consider a 
ball $B_X(x,1/k)$ in the complexified manifold $X$. By the submean value 
property we have that	
\[
 |p_k(x)|^2 \lesssim k^{2n} \int_{B_X(x,1/k)}|p_k(y)|^2\, dV_X(y) \lesssim
k^{n}\int_{M}|p_k|^2\, dV_M.
\]
Therefore $K_k(x,x)\lesssim k^n$ and we have proved the upper inequality in 
\eqref{eq:bounds on bergman function intro}. We include the argument for 
completeness but this upper bound is well known and it follows from the 
arguments in \cite{zeriahi}.
\end{remark}

It is also possible to prove a converse result to Theorem~\ref{Bernstein}:
\begin{thm}\label{converse}
 Let $M$ be a smooth compact submanifold in $\mathbb R^m$. If there is a
constant $C>0$ such that for some $q\in[1,\infty]$,
 \[
 \|\nabla_t p\|_{L^q(M)} \le C\deg(p) \|p\|_{L^q(M)}, 
\]
for all polynomials $p\in \mathcal P(\mathbb R^m)$, then $M$ is algebraic.
\end{thm}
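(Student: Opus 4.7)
The case $q=\infty$ is the main theorem of \cite{blmr}, so I focus on $q\in[1,\infty)$; my plan is to deduce the $L^\infty$ tangential Bernstein inequality from the $L^q$ version, after which the main result of \cite{blmr} yields algebraicity of $M$.

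The first step is a power-raising trick. Given a polynomial $p$ of degree $k$, apply the hypothesis to $p^N$ (of degree $Nk$) for each integer $N\ge 1$. Using $\nabla_t p^N = N p^{N-1}\nabla_t p$ and $\|p^N\|_{L^q(M)}^q = \int_M |p|^{Nq}\,dV_M$, cancellation of $N$ and raising to the $q$-th power yield
\[
\int_M |p|^{(N-1)q}|\nabla_t p|^q\,dV_M \le (Ck)^q \int_M |p|^{Nq}\,dV_M.
\]
Equivalently, the probability measures $d\nu_N := |p|^{Nq}\,dV_M/\int_M |p|^{Nq}\,dV_M$ satisfy $\int_M (|\nabla_t p|/|p|)^q\,d\nu_N \le (Ck)^q$. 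As $N\to\infty$, Laplace's method shows that $\nu_N$ concentrates on the max-set $E:=\{x\in M:|p(x)|=\|p\|_{L^\infty(M)}\}$, and passing to a weak-$\ast$ limit gives
\[
|\nabla_t p(x^*)| \le Ck\,\|p\|_{L^\infty(M)} \quad \text{for some } x^*\in E.
\]

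The second step extends this bound, which a priori holds only at one point of $E$, to an arbitrary point $x_0\in M$. I would apply Step~1 to the auxiliary polynomial $\tilde p := p + \lambda L_{x_0}$ of degree $\max(k,2)$, where $L_{x_0}(x):=1 - c|x-x_0|^2$ and $c>0$ is chosen small enough that $|L_{x_0}|\le 1$ on $M$ with equality only at $x_0$. For $\lambda$ sufficiently large, $|\tilde p|$ is maximized at a point $x^*$ close to $x_0$; combining the bound $|\nabla_t\tilde p(x^*)| \le Ck(\lambda + \|p\|_{L^\infty})$ from Step~1 with $\nabla_t L_{x_0}(x_0)=0$ and a careful choice $\lambda\sim \|p\|_{L^\infty(M)}$ should yield $|\nabla_t p(x_0)| \le C'k\,\|p\|_{L^\infty(M)}$, i.e., the $L^\infty$ tangential Bernstein inequality.

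The main obstacle lies in Step~2, namely the quantitative transfer from $x^*$ to $x_0$: to compare $\nabla_t\tilde p(x^*)$ with $\nabla_t\tilde p(x_0)$ one needs a Markov-type estimate on $\nabla_t^2 \tilde p$, which is not immediate from the $L^q$ hypothesis. This can be handled by a bootstrap argument that starts from the (exponential) Bernstein-Markov-type inequality \eqref{bmlight} from \cite{brudny}, uses it to get a crude modulus of continuity for $\nabla_t p$, and iteratively refines the exponent via Step~1 until the correct linear order in $k$ is reached.
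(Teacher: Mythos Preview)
Your Step~1 collapses to a tautology. On the compact manifold $M$, at any point $x^*$ where $|p|$ attains its maximum the first-derivative test already gives $\nabla_t p(x^*)=0$: if $p(x^*)\ne 0$ then $p|_M$ has a local extremum at $x^*$, and if $p(x^*)=0$ then $p\equiv 0$ on $M$. Thus the Laplace-limit conclusion $|\nabla_t p(x^*)|\le Ck\|p\|_{L^\infty(M)}$ reads $0\le Ck\|p\|_{L^\infty(M)}$, and the $L^q$ hypothesis has not actually been used. The same remark applies in Step~2: the bound you obtain at the maximizer of $|\tilde p|$ is again just $\nabla_t\tilde p(x^*)=0$, which is pure calculus, and the passage from $x^*$ to $x_0$ then needs precisely the second-derivative control you do not have. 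Your proposed bootstrap has nothing to iterate on, since no nontrivial $k$-dependent gradient estimate has yet been produced at a prescribed point. (Incidentally, the $q=\infty$ characterization is \cite{BosLevMilTay}, not \cite{blmr}.)

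The paper's proof is entirely different and does not pass through $q=\infty$. One chooses an $\epsilon/k$-net $\Lambda_k\subset M$ with $\#\Lambda_k\asymp k^n$ and considers the linear map $T_k\colon\mathcal P_k(M)\to\R^{\#\Lambda_k}$ sending $p$ to its vector of ball-averages $\bigl(\fint_{B_M(\lambda,\epsilon/k)}p\bigr)_{\lambda\in\Lambda_k}$. The Poincar\'e inequality on each ball combined with the $L^q$ Bernstein hypothesis yields
\[
\int_M|p|^q\,dV_M \;\lesssim\; \frac{1}{k^n}\sum_{\lambda\in\Lambda_k}\Bigl|\fint_{B_M(\lambda,\epsilon/k)}p\Bigr|^q \;+\; C\epsilon^q\int_M|p|^q\,dV_M,
\]
so for $\epsilon$ small enough $T_k$ is injective and $\dim\mathcal P_k(M)\le\#\Lambda_k\asymp k^n$. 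Since $n<m$, the restriction map $\mathcal P_k(\R^m)\to\mathcal P_k(M)$ must have nontrivial kernel, so the Zariski closure of $M$ has dimension at most $n$ and $M$ is algebraic.
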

\begin{proof}
We will need a definition
\begin{definition} A sequence of finite sets
$\{\Lambda_k\}$ is an $\epsilon$-net if it is uniformly separated and 
$1\le\sum_{\lambda\in \Lambda_k}\chi_{B(\lambda,\varepsilon/k)}(x)\le C$, for
all $x\in M$ and $k>0$
\end{definition}
 By an application of the Vitali covering lemma it
is possible to construct $\epsilon$-nets for arbitrarily small $\varepsilon$
where the constant $C=C_M$ depends on the dimension of $M$ but
not on $\varepsilon$.

Given an  $\epsilon$-net $\Lambda=\Lambda(\varepsilon)$ we
denote by $l_k=\#\Lambda_k$. We may
define:
$T_k: \mathcal P_k(M)\to \mathbb R^{l_k}$ as 
\[
 T_k(p)(\lambda)= p_{B_M(\lambda,\varepsilon/k)}:=
\fint_{B_M(\lambda,\varepsilon/k)} p\, dV_M \qquad \forall \lambda\in \Lambda_k.
\]
We will prove now that if $\varepsilon$ is small enough then
\begin{equation}\label{pseudosamp}
 \int_M |p_k|^q \lesssim \frac 1{k^n} \sum_{\lambda\in \Lambda_k}
|T_k(p)(\lambda)|^q.
\end{equation}
If this is the case, then $T_k$ is one to one and $\dim (\mathcal P_k(M))\le
l_k\simeq k^n$ where $n=\dim(M)$ which is much smaller than $k^m$. Thus $M$ is
algebraic.

Let us prove \eqref{pseudosamp}. 
\[
  \int_M |p_k|^q\, dV_M\lesssim  \frac 1{k^n} \sum_{\lambda\in
\Lambda_k}
|T_k(p)(\lambda)|^q+
\sum_{\Lambda_k}\int_{B_M(\lambda,\varepsilon/k)}
|p(x) -p_{B_M(\lambda,\varepsilon/k)}|^q\, dV_M.
\]
By the Poincaré inequality:
\[
 \int_{B_M(\lambda,\varepsilon/k)}
|p(x)-p_{B_M(\lambda,\varepsilon/k)}|^q\, dV_M\lesssim
 \frac{\varepsilon^q}{k^q} \int_{B_M(\lambda,\varepsilon/k)} |\nabla_t p|^q\,
dV_M.
\]
By Theorem~\ref{Bernstein}
\[
 \int_M |p_k|^q\, dV_M \lesssim \frac 1{k^n} \sum_{\lambda\in \Lambda_k}
|T_k(p)(\lambda)|^q + \varepsilon  \int_M |p_k|^q\, dV_M.
\]
and if $\varepsilon$ is small enough then \eqref{pseudosamp} follows.

\end{proof}

\subsection{Applications}
We will use now the Bernstein inequality to get some more information on
sampling and interpolation sequences of finite sets.

\begin{definition}
 A sequence of measures $\{\mu_k\}_k$ is said to be a uniformly sequence of
Carleson
measures if there is a $C>0$ such that
\begin{equation}\label{Carleson}
 \int_{M}|p_k|^2\, d\mu_k \le C \|p_k\|^2, \qquad \forall p_k\in \mathcal
P_k.
\end{equation}
\end{definition}
In $M$ we consider the balls defined by any Riemannian metric.
\begin{prop}\label{PropCarleson}
 A sequence $\{\mu_k\}_k$ is a uniformly sequence of
Carleson measures if and only if there is a $C>0$ such that 
\[
 \mu_k(B(x,1/k))< C/k^n \text{ for all }x\in M, k\in\mathbb N.
\]
\end{prop}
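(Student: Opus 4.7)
The plan is to prove both implications by exploiting the two-sided Bergman kernel estimate $K_k(x,x)\asymp k^n$ from Theorem~\ref{thm:sampling affine compact intro} together with the $L^\infty$ Bernstein inequality (Theorem~\ref{Bernstein} with $q=\infty$). The central geometric observation is that polynomials of degree $k$ are essentially constant on balls of radius $1/k$ and that the concentration profile of the reproducing kernel $K_k(\cdot,x)$ at $x$ is precisely of width $\sim 1/k$ and height $\sim k^n$.

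For the necessity direction, I would fix $x\in M$ and test the Carleson inequality \eqref{Carleson} against the normalized reproducing kernel $q_k(y):=K_k(y,x)/\sqrt{K_k(x,x)}$. By construction $\|q_k\|_{L^2(dV_M)}=1$, and Theorem~\ref{thm:sampling affine compact intro} gives $|q_k(x)|^2=K_k(x,x)\gtrsim k^n$ together with the uniform bound $\|q_k\|_\infty^2\le\sup_y K_k(y,y)\lesssim k^n$. The Bernstein inequality then yields $\|\nabla_t q_k\|_\infty\lesssim k\|q_k\|_\infty\lesssim k^{1+n/2}$, so for $y\in B(x,\varepsilon/k)$ with $\varepsilon$ small enough,
\[
|q_k(y)|\ge |q_k(x)|-\tfrac{\varepsilon}{k}\|\nabla_t q_k\|_\infty\gtrsim k^{n/2}.
\]
Combining this lower bound on $B(x,\varepsilon/k)$ with \eqref{Carleson} applied to $q_k$ gives $k^n\mu_k(B(x,\varepsilon/k))\lesssim 1$, and a finite covering of $B(x,1/k)$ by $O(\varepsilon^{-n})$ balls of radius $\varepsilon/k$ concludes this direction.

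For sufficiency, I would fix a small $\varepsilon>0$ and select an $\varepsilon$-net $\Lambda_k\subset M$ as in the proof of Theorem~\ref{converse}, arranged so that the balls $\{B_M(\lambda,\varepsilon/k)\}_\lambda$ cover $M$ with bounded overlap. Then
\[
\int_M |p_k|^2\, d\mu_k\le\sum_{\lambda\in\Lambda_k}\mu_k(B_M(\lambda,\varepsilon/k))\,\sup_{B_M(\lambda,\varepsilon/k)}|p_k|^2.
\]
A localized version of the submean-value argument from the remark following Theorem~\ref{tubular}, combined with Lemma~\ref{lem:comparison of l2 norms for polyn} applied to the small Euclidean ball $B_X(\lambda,C/k)$ in place of the global tube $U_k$, gives
\[
\sup_{B_M(\lambda,\varepsilon/k)}|p_k|^2\lesssim k^n\int_{B_M(\lambda,C/k)}|p_k|^2\, dV_M.
\]
Substituting the hypothesis $\mu_k(B_M(\lambda,\varepsilon/k))\lesssim k^{-n}$ and summing with bounded overlap gives $\int_M|p_k|^2\, d\mu_k\lesssim\int_M|p_k|^2\, dV_M=\|p_k\|^2$.

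The only nontrivial point is the \emph{localized} submean-value bound with the sharp factor $k^n$ rather than the crude $k^{2n}$ coming from holomorphic subharmonicity in the complexification. The gain of a factor $k^n$ reflects the discrepancy between the volume of a Euclidean ball in $X$ of radius $1/k$ and the volume of the corresponding piece of the real tubular neighbourhood of $M$, which is precisely the content of Lemma~\ref{lem:comparison of l2 norms for polyn}. I expect no further obstacle once this localisation is carried out.
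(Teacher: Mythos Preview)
Your necessity argument is essentially the paper's: test against the normalized reproducing kernel, use the diagonal bounds $K_k(x,x)\asymp k^n$, and propagate the lower bound over a ball of radius $\sim 1/k$ via the $L^\infty$ Bernstein inequality. That direction is fine.

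The sufficiency direction is where your proposal diverges from the paper and where the gap lies. The paper does \emph{not} localize. It integrates the pointwise complex submean inequality
\[
|p_k(x)|^2 \lesssim k^{2n}\int_{B_X(x,1/k)}|p_k|^2\,dV_X
\]
directly against $d\mu_k(x)$, switches the order of integration, uses the hypothesis $\mu_k(B(y,1/k))\lesssim k^{-n}$ to kill one factor $k^n$, and then invokes Theorem~\ref{tubular} (the $L^2$ version in the remark following it) \emph{globally} to bound $\int_{U_{1/k}}|p_k|^2\,dV_X$ by $k^{-n}\int_M|p_k|^2\,dV_M$, killing the remaining factor $k^n$. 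No $\varepsilon$-net and no local real submean bound are needed.

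Your route instead hinges on the local estimate
\[
\sup_{B_M(\lambda,\varepsilon/k)}|p_k|^2\lesssim k^n\int_{B_M(\lambda,C/k)}|p_k|^2\,dV_M,
\]
which you propose to obtain by localizing Lemma~\ref{lem:comparison of l2 norms for polyn}. Two problems. First, that lemma goes the wrong way: it bounds $\int_M$ by $\int_{U_k}$, not conversely; what you actually need is a local version of Theorem~\ref{tubular}. Second, and more seriously, Theorem~\ref{tubular} does not localize. Its proof rests on global pluripotential data (the Siciak extremal function and Demailly's three-circles convexity), and the local tubular inequality
\[
\int_{B_X(\lambda,1/k)}|p_k|^2\,dV_X\lesssim k^{-n}\int_{B_M(\lambda,C/k)}|p_k|^2\,dV_M
\]
is false for every fixed $C$. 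On $M=S^1$ take $p_k(z)=(z-1-A/k)^k$ with $A>(C^2-1)/2$: on the complex disk $B_X(1,1/k)$ the modulus $|w-A/k|$ reaches $(A+1)/k$ in the direction normal to $M$, whereas on the real arc $B_M(1,C/k)$ it only reaches $\sqrt{A^2+C^2}/k$, and the ratio of the two integrals blows up like $\bigl((A+1)^2/(A^2+C^2)\bigr)^k$. So the step you single out as ``the only nontrivial point'' cannot be carried out along the line you indicate; the paper's global Fubini argument is precisely what circumvents it.
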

\begin{proof}
 The necessity follows from testing \eqref{Carleson} against normalized
reproducing kernels. For any $x\in M$, let
$\kappa_{x,k}(y)=K_k(x,y)/\sqrt{K_k(x,x)}$. Then it is clear that for all $y\in
M$:
\[
 |\kappa_{x,k}(y)|\le |\langle
K_k(x,\cdot),K_k(y,\cdot)\rangle|/\sqrt{K_k(x,x)}\le \sqrt{K_k(y,y)}\simeq
k^{n/2}.
\]
and $|\kappa_{x,k}(x)|=\sqrt{K_k(x,x)}\simeq k^{n/2}$. On the other hand
since M is algebraic we have the classical Bernstein inequality, see
\cite{BosLevMilTay}:
\[
\sup_{M}\|\nabla_t \kappa_{x,k}\|\lesssim k \sup_{M} |\kappa_{x,k}|\simeq
k^{n/2+1} .
\]
This means that there is a $\delta>0$ such that 
\[
 |\kappa_{x,k}(y)| \gtrsim \kappa_{x,k}(x)\simeq k^{n/2}, \qquad \forall y\in
B_M(x,\delta/k).
\]
Therefore if we test \eqref{Carleson} with $\kappa_{x,k}$ we get that
$\mu_k(B_M(x,\delta/k))\lesssim 1/k^n$.

On the other direction for any $x\in M$ we consider a ball $B_X(x,r)$ the ball
in the complexified manifold $X$. By the submean property of the holomorphic
functions $f\in \mathcal H(X)$, we have that for $r\le r_0$, $|f(x)| \lesssim
\fint_{B_X(x,r)}|f(y)|\, dV_X(y)$. Thus
\[
 |p_k(x)|^2 \lesssim \fint_{B_X(x,1/k)}|p_k(y)|^2\, dV_X(y) \simeq
k^{2n}\int_{B_X(x,1/k)}|p_k(y)|^2\, dV_X(y).
\]
Finally, thanks to Theorem~\ref{tubular}
\[
\begin{split}
 \int_X |p_k(x)|^2\, d\mu_k(x)\lesssim\int_{x\in X}
k^{2n}\int_{B_X(x,1/k)}|p_k(y)|^2\, dV_X(y), d\mu_k(x)\lesssim\\
\int_{y\in X, d(y,M)<1/k} k^{2n}|p_k(y)|^2 \mu_k(B_X(y,1/k)\cap
X)dV_X(y)\lesssim\\
\int_{y\in X, d(y,M)<1/k}k^n|p_k(y)|^2dV_X(y)\lesssim \int_X |p_k(x)|^2\,
dV_M(x).
\end{split}
\]

\end{proof}

An immediate corollary is the description of the sequences that satisfy the
left hand side inequality of the sampling sequences, that is a 
Plancherel-Polya type inequality.

We say that a sequence of finite sets $\Lambda_k$ is \emph{uniformly separated} 
if and only if
there is an $\varepsilon>0$ such that $d(\lambda,\lambda')\ge \varepsilon/k$ for
all $\lambda\ne\lambda'$, $\lambda,\lambda'\in \Lambda_k$.
\begin{coro}[Plancherel-Polya type inequality]\label{PlancherelPolya}
 The sequence of finite sets  $\Lambda_k$ is a finite union of uniformly 
separated sequences if and
only if  there is a constant $C>0$ such that
 \[
  \frac 1{k^n}\sum_{\lambda\in \Lambda_k} |p(\lambda)|^2\le C \int_M |p|^2,
\quad \forall p\in\mathcal P_k.  
 \]
\end{coro}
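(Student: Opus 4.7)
The plan is to deduce the corollary from Proposition~\ref{PropCarleson} by choosing the right sequence of measures and then recognizing the resulting local density condition as equivalent to being a finite union of uniformly separated sequences.

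First I would set $\mu_k := k^{-n}\sum_{\lambda\in\Lambda_k}\delta_\lambda$, so that the inequality to be proven reads precisely
\[
\int_M |p|^2\,d\mu_k \le C\|p\|^2,\qquad \forall p\in\mathcal P_k,
\]
i.e., $\{\mu_k\}$ is a uniform sequence of Carleson measures in the sense of \eqref{Carleson}. By Proposition~\ref{PropCarleson} this happens if and only if $\mu_k(B(x,1/k)) \le C k^{-n}$ uniformly in $x\in M$ and $k\in\mathbb N$, which by the definition of $\mu_k$ is the same as
\[
\#\bigl(\Lambda_k\cap B(x,1/k)\bigr)\le C,\qquad \forall x\in M,\; k\ge 1. \tag{$\ast$}
\]
So the corollary reduces to showing that $(\ast)$ is equivalent to $\Lambda_k$ being a finite union of uniformly separated sequences.

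For the easy direction, suppose $\Lambda_k=\bigcup_{j=1}^N\Lambda_k^{(j)}$ with each $\Lambda_k^{(j)}$ uniformly separated with constant $\varepsilon>0$. Then the balls $B(\lambda,\varepsilon/(2k))$ with $\lambda\in\Lambda_k^{(j)}$ are pairwise disjoint; if moreover $\lambda\in B(x,1/k)$ these balls are all contained in $B(x,(1+\varepsilon/2)/k)$. A volume comparison on the Riemannian manifold $M$ (whose volume form is locally comparable to Lebesgue measure at scales $\lesssim 1/k$) gives $\#(\Lambda_k^{(j)}\cap B(x,1/k))\le C_\varepsilon$, and summing over $j$ yields $(\ast)$.

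For the converse I would use a greedy/coloring argument. Fix a small $\varepsilon>0$ to be chosen and form the graph $G_k$ on vertex set $\Lambda_k$ with $\lambda\sim\lambda'$ whenever $0<d(\lambda,\lambda')<\varepsilon/k$. By $(\ast)$ each vertex has degree at most $C-1$, so $G_k$ has chromatic number at most $C$; a proper coloring partitions $\Lambda_k$ into at most $C$ classes, each of which is by construction $\varepsilon/k$-separated. The number $N=C$ of classes is uniform in $k$, giving the required decomposition.

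The main obstacle, such as it is, is conceptual rather than technical: one must recognize the equivalence between the local density bound $(\ast)$, the Carleson condition \eqref{Carleson} for the atomic measures $\mu_k$, and the geometric decomposition into uniformly separated families. Once Proposition~\ref{PropCarleson} is in hand, each of these equivalences is a short packing/coloring argument, so the corollary follows immediately.
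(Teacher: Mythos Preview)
Your proof is correct and follows exactly the paper's approach: apply Proposition~\ref{PropCarleson} to the atomic measures $\mu_k=k^{-n}\sum_{\lambda\in\Lambda_k}\delta_\lambda$ and identify the resulting local density bound with the finite-union-of-separated-sequences condition. The paper simply asserts this last equivalence, whereas you spell out the packing and greedy-coloring arguments, but the route is the same.
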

\begin{proof}
 Apply Proposition~\ref{PropCarleson} to the measures $\mu_k=\frac
1{k^n}\sum_{\lambda\in\Lambda_k}\delta_{\lambda}$. This implies that the
Plancherel-Polya type inequality holds if and only if $\# \{\Lambda_k \cap
B(x,1/k)\}\le C$ uniformly in $x$ and $k$. That is, the sequence $\Lambda_k$ is 
a finite union of uniformly separated sequences of sets.
\end{proof}

Once we have a Bernstein type inequality the following Proposition is standard, 
see \cite[Proposition 5, p.~47]{Seip} and it allows to 
reduce our 
considerations to uniformly separated sequences.
\begin{prop}
 If $\Lambda_k$ is a sampling sequence then there is a
uniformly separated sequence of subsets $\Lambda'_k\subset \Lambda_k$ such that 
$\Lambda'_k$ is still a sampling sequence.
\end{prop}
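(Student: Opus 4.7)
The plan is to thin $\Lambda_k$ by a greedy procedure at scale $\varepsilon/k$, for a fixed small $\varepsilon>0$, and then to argue via the Bernstein inequality (Theorem~\ref{Bernstein}) that the discarded points are redundant in the sampling sense. The strategy parallels the classical argument in \cite[Proposition~5]{Seip} for Paley--Wiener sequences, which the statement explicitly invokes.

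First I would use the upper half of the sampling inequality for $\Lambda_k$, together with Corollary~\ref{PlancherelPolya}, to conclude that $\Lambda_k$ is a finite union of uniformly separated sequences; in particular, there is $N=N(\varepsilon)$, independent of $k$, such that $\#(\Lambda_k\cap B_M(x,\varepsilon/k))\le N$ uniformly in $x\in M$ and $k\ge 1$. For $\varepsilon>0$ to be fixed later, I would construct $\Lambda'_k$ greedily as a maximal $\varepsilon/k$-separated subset of $\Lambda_k$ (sequentially add points at distance $\ge\varepsilon/k$ from those already chosen). By construction $\Lambda'_k$ is uniformly separated, and every $\lambda\in\Lambda_k$ lies within distance $\varepsilon/k$ of at least one representative $\lambda'(\lambda)\in\Lambda'_k$; the previous cardinality bound forces the map $\lambda\mapsto\lambda'(\lambda)$ to have fibers of size at most $N$.

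It remains to transfer the lower sampling bound of $\Lambda_k$ to $\Lambda'_k$; the upper bound is automatic because $\Lambda'_k\subset\Lambda_k$. Splitting
\[
|p(\lambda)|^{2}\le 2\,|p(\lambda'(\lambda))|^{2}+2\,|p(\lambda)-p(\lambda'(\lambda))|^{2}
\]
and summing over $\lambda\in\Lambda_k$, the first term is dominated by $2N\sum_{\lambda'\in\Lambda'_k}|p(\lambda')|^{2}$ via the fiber bound, and the error term $\sum_{\lambda}|p(\lambda)-p(\lambda'(\lambda))|^{2}$ must be controlled via Theorem~\ref{Bernstein}---either through an $L^{2}$ bound on $\nabla_{t}p$ combined with the Plancherel--Polya inequality of Corollary~\ref{PlancherelPolya} applied to $\nabla_{t}p$, or through the near-diagonal stability $|K_{k}(\lambda,\lambda')-K_{k}(\lambda',\lambda')|\lesssim\varepsilon\,k^{n}$ of the reproducing kernel that is already used in the proof of Proposition~\ref{PropCarleson}. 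For $\varepsilon$ sufficiently small, the resulting error is strictly less than the sampling constant of $\Lambda_k$ times $\sum_{\lambda\in\Lambda_k}|p(\lambda)|^{2}$, and absorbing it into the left-hand side of the sampling inequality for $\Lambda_k$ delivers the lower sampling bound for $\Lambda'_k$.

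The main technical obstacle will be making the absorption step work \emph{uniformly in $k$}: a naive pointwise application of the $L^{\infty}$ Bernstein estimate is not enough, since polynomials of degree $k$ can oscillate on the scale $1/k$ (as the Chebyshev example shows near the ``boundary'' of a spectrum). The efficient estimate instead uses the $L^{2}$ form of Theorem~\ref{Bernstein}, or equivalently the kernel stability derived from it, so that the error term scales as a power of $\varepsilon$ independent of $k$. Once this is in place, the rest of the argument is a routine absorption identical to the one in \cite[Proposition~5]{Seip}.
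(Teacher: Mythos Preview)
Your proposal is correct and matches the paper's approach exactly: the paper does not give a proof at all, but simply declares the result ``standard'' once a Bernstein-type inequality is available and defers to \cite[Proposition~5, p.~47]{Seip}. Your sketch is precisely the Seip argument adapted to this setting, with the Bernstein inequality of Theorem~\ref{Bernstein} and the Plancherel--Polya characterization of Corollary~\ref{PlancherelPolya} playing the roles you assign them; your identification of the uniform-in-$k$ absorption as the only nontrivial step, and of the $L^2$ Bernstein inequality (rather than the pointwise one) as the correct tool for it, is on target.
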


It is also completely standard that 
\begin{prop}\label{sepint}
 If $\Lambda_k$ is an interpolating sequence then is is uniformly separated. 
\end{prop}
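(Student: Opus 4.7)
The plan is based on the standard fact that in a Riesz sequence, distinct elements cannot be nearly parallel. By the definition given in Section~\ref{sec:interpolation}, an interpolating $\Lambda_k$ means that $\{\kappa_\lambda\}_{\lambda\in\Lambda_k}$ is a Riesz sequence in $H_k(M)$ with constants $A,B>0$ independent of $k$. Testing the lower Riesz inequality against a two-term combination $c_1\kappa_\lambda+c_2\kappa_{\lambda'}$ and choosing the sign of $c_2/c_1\in\{\pm 1\}$ to make the cross term negative gives the angular separation
$$|\langle \kappa_\lambda,\kappa_{\lambda'}\rangle|\le 1-A<1$$
for every pair of distinct points $\lambda,\lambda'\in\Lambda_k$. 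I will derive a contradiction with this bound under the assumption that $\Lambda_k$ is not uniformly separated.

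Suppose there exist $k_j\to\infty$ and $\lambda_j\ne\lambda'_j$ in $\Lambda_{k_j}$ with $k_j\,d(\lambda_j,\lambda'_j)\to 0$. I will show $\langle\kappa_{\lambda_j},\kappa_{\lambda'_j}\rangle\to 1$, contradicting the angular bound. The two ingredients are the two-sided kernel bound $K_k(x,x)\simeq k^n$ on $M$ (Theorem~\ref{thm:sampling affine compact intro}) and the $L^\infty$ Bernstein inequality $\|\nabla_t p\|_\infty\le Ck\|p\|_\infty$ for $p\in\mathcal{P}_k$ (the case $q=\infty$ of Theorem~\ref{Bernstein}, cited from \cite{BosLevMilTay}). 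Since $\kappa_\lambda$ is a polynomial of degree at most $k$, Cauchy--Schwarz for the reproducing kernel gives $|\kappa_\lambda(y)|\le\sqrt{K_k(y,y)}\lesssim k^{n/2}$, so Bernstein yields $\|\nabla_t\kappa_\lambda\|_\infty\lesssim k^{n/2+1}$.

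Combining this with $\kappa_\lambda(\lambda)=\sqrt{K_k(\lambda,\lambda)}\gtrsim k^{n/2}$ and the mean value inequality,
$$\frac{K_k(\lambda,\lambda')}{K_k(\lambda,\lambda)}=\frac{\kappa_\lambda(\lambda')}{\kappa_\lambda(\lambda)}=1+O\bigl(k\,d(\lambda,\lambda')\bigr).$$
Swapping the roles of $\lambda$ and $\lambda'$ and using the symmetry $K_k(\lambda,\lambda')=K_k(\lambda',\lambda)$ gives likewise $K_k(\lambda,\lambda')/K_k(\lambda',\lambda')=1+O(k\,d(\lambda,\lambda'))$, and dividing the two estimates yields $K_k(\lambda,\lambda)/K_k(\lambda',\lambda')=1+O(k\,d(\lambda,\lambda'))$. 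Therefore
$$\langle\kappa_\lambda,\kappa_{\lambda'}\rangle=\frac{K_k(\lambda,\lambda')}{\sqrt{K_k(\lambda,\lambda)K_k(\lambda',\lambda')}}=\frac{K_k(\lambda,\lambda')}{K_k(\lambda,\lambda)}\cdot\sqrt{\frac{K_k(\lambda,\lambda)}{K_k(\lambda',\lambda')}}=1+O\bigl(k\,d(\lambda,\lambda')\bigr).$$

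Evaluating at the hypothetical sequence with $k_j\,d(\lambda_j,\lambda'_j)\to 0$ forces $\langle\kappa_{\lambda_j},\kappa_{\lambda'_j}\rangle\to 1$, which contradicts the angular separation $|\langle\kappa_\lambda,\kappa_{\lambda'}\rangle|\le 1-A$ and proves the proposition. The only nontrivial input is the pointwise control of $\kappa_\lambda$ via the Bernstein inequality coupled with the two-sided kernel bounds; the symmetric use of $\lambda\leftrightarrow\lambda'$ is essential to turn a statement about the ratio $\kappa_\lambda(\lambda')/\kappa_\lambda(\lambda)$ into one about the genuine inner product $\kappa_\lambda(\lambda')/\sqrt{K_k(\lambda',\lambda')}$, and everything else is algebraic manipulation, so I expect no further difficulty.
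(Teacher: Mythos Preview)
Your proof is correct. The paper does not actually prove this proposition; it simply asserts that it is ``completely standard'' and moves on. Your argument is precisely the standard one, and in fact it uses the very same ingredients---the two-sided bound $K_k(x,x)\simeq k^n$ and the $L^\infty$ Bernstein inequality---that the paper deploys a few lines earlier in the necessity direction of Proposition~\ref{PropCarleson} to show that $|\kappa_{x,k}(y)|\gtrsim k^{n/2}$ on balls of radius $\delta/k$.

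One minor remark: the mean value inequality for $\nabla_t$ controls $|\kappa_\lambda(\lambda')-\kappa_\lambda(\lambda)|$ in terms of the \emph{intrinsic} distance on $M$, whereas uniform separation is phrased via the ambient Euclidean distance $d(x,y)=|x-y|$. These are comparable near the diagonal because $M$ is a smooth compact submanifold of $\R^m$, so there is no issue, but you might add a word to that effect. Also, you do not really need to pass to a subsequence with $k_j\to\infty$: your estimates give a uniform $c>0$ (depending only on $A$ and the implicit constants) such that $k\,d(\lambda,\lambda')<c$ forces $\langle\kappa_\lambda,\kappa_{\lambda'}\rangle>1-A$, which directly yields $d(\lambda,\lambda')\ge c/k$.
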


Theorem~\ref{tubular} can also be used to provide a sufficient condition that
assures the existence of sampling sequences. More precisely, we say
that the sequence $\Lambda_k$ is an $\epsilon$-net if it is uniformly separated 
and 
$1\le\sum_{\lambda\in \Lambda_k}\chi_{B(\lambda,\varepsilon/k)}(x)\le C_M$, for
all $x\in M$ and $k>0$, the constant $C_M$ depends on $M$ but
not on $\varepsilon$. By an application of the Vitali covering lemma it
is possible to construct $\epsilon$-nets for arbitrarily small $\varepsilon$.

\begin{prop}\label{nets}
 There is an $\epsilon_0$ such that any sequence $\Lambda_k$ that is an
$\epsilon$-net with $\epsilon<\epsilon_0$ is a sampling sequence.
\end{prop}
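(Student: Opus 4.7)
The plan is to establish both sampling inequalities for an $\epsilon$-net $\Lambda_k$ with $\epsilon$ sufficiently small. The upper sampling inequality is immediate: since an $\epsilon$-net is by definition uniformly separated, Corollary~\ref{PlancherelPolya} gives the right-hand inequality with a constant independent of $k$.

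For the lower inequality I would exploit the covering property $\sum_\lambda \chi_{B_M(\lambda,\epsilon/k)}\ge 1$ characteristic of an $\epsilon$-net, combined with the pointwise split $|p(x)|^2\le 2|p(\lambda)|^2 + 2|p(x)-p(\lambda)|^2$, to obtain
\[
\int_M |p|^2\,dV_M \le 2\sum_\lambda |B_M(\lambda,\epsilon/k)|\,|p(\lambda)|^2 + 2\sum_\lambda \int_{B_M(\lambda,\epsilon/k)} |p(x)-p(\lambda)|^2\,dV_M(x).
\]
The first sum is already of the target form $C\epsilon^n k^{-n}\sum_\lambda |p(\lambda)|^2$. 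The whole argument therefore reduces to showing that the error sum is at most $C\epsilon^{\alpha}\|p\|_{L^2(M)}^2$ for some $\alpha>0$ with $C$ independent of $\epsilon$ and $k$; once that is in place, choosing $\epsilon_0$ with $C\epsilon_0^{\alpha}<1/2$ lets us absorb the error and conclude.

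To control the error, estimate pointwise along a minimizing geodesic on $M$ to get $|p(x)-p(\lambda)|^2 \le (\epsilon/k)^2 \sup_{B_M(\lambda,\epsilon/k)}|\nabla_t p|^2$, and use the Cauchy inequalities on the complexification $X$ at the natural scale $r=1/k$ (rather than $\epsilon/k$):
\[
|\nabla_t p(y)|^2 \lesssim k^{2n+2}\int_{B_X(y,1/k)} |p|^2\,dV_X \lesssim k^{2n+2}\int_{B_X(\lambda,2/k)} |p|^2\,dV_X
\]
for any $y\in B_M(\lambda,\epsilon/k)$. Multiplying by $|B_M(\lambda,\epsilon/k)|\simeq (\epsilon/k)^n$, summing over $\lambda$ (the complex balls $B_X(\lambda,2/k)$ overlap with multiplicity $O(\epsilon^{-n})$ since the $\lambda$ are $\epsilon/k$-separated on $M$) and invoking Theorem~\ref{tubular} to pass from the tube $U_{2/k}$ back to $M$, one obtains the clean bound
\[
\sum_\lambda \int_{B_M(\lambda,\epsilon/k)} |p(x)-p(\lambda)|^2\,dV_M(x) \lesssim \epsilon^{2}\,\|p\|_{L^2(M)}^2,
\]
which is absorbable for $\epsilon<\epsilon_0$ sufficiently small.

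The main obstacle is precisely this error estimate. A naive sup-bound for $|\nabla_t p|$ on a ball of radius $\epsilon/k$ either invokes the global Bernstein inequality of Theorem~\ref{Bernstein} (losing all $\epsilon$-dependence) or uses a submean property at scale $\epsilon/k$ on the complexification (losing a factor $\epsilon^{-n}$ that cannot be compensated). The key is to \emph{decouple the scales}: apply the Cauchy inequality on the complex side at the fixed scale $1/k$, pay the resulting $O(\epsilon^{-n})$ multiplicity in the sum over $\lambda$, and recover the favourable $\epsilon^2$ factor from the $(\epsilon/k)^2$ prefactor together with the $k^{-n}$ gain provided by Theorem~\ref{tubular}.
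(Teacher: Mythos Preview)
Your argument is correct and follows essentially the same route as the paper: split on the covering balls, control the oscillation via the Cauchy inequality on the complexification at the fixed scale $1/k$, pay the $\epsilon^{-n}$ overlap, and close with the tubular estimate (you need the $L^2$ version from the remark following Theorem~\ref{tubular}, not the $L^1$ statement itself). The only difference is that the paper runs the computation in $L^1$, handles $L^\infty$ separately via the tangential Bernstein inequality, and then interpolates to obtain sampling for all $L^q$, whereas you go directly to $L^2$; your route is more economical for the proposition as stated, while the paper's yields the full $L^q$ range.
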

\begin{proof}
 Take one $\epsilon$-net $\Lambda_k=\Lambda_k(\varepsilon)$. Then
 \[
 \begin{split}
&\int_M |p_k|\,dV_M\le \sum_{\lambda\in
\Lambda_k}\int_{B_M(\lambda,\varepsilon/k)} |p_k|\,dV_M\le \\ 
&\le\sum_{\lambda\in \Lambda_k} |p_k(\lambda)||B_M(\lambda, \varepsilon/k)|+
\sum_{\lambda\in
\Lambda_k}\int_{B_M(\lambda,\varepsilon/k)}|p_k(x)-p_k(\lambda)|\,
dV_M\lesssim\\ 
&\lesssim\sum_{\lambda\in \Lambda_k}
\frac{\varepsilon^n}{k^n}|p_k(\lambda)|+ \sum_{\lambda\in \Lambda_k}
\frac{\varepsilon}{k}|\nabla_t p_k(\zeta_\lambda)||B_M(\lambda, \varepsilon/k)|,
\end{split}
\]
where $\zeta_\lambda\in \overline{B_M(\lambda,\varepsilon/k)} $ is such that
$|\nabla_t
p_k(\zeta_\lambda)|=\sup_{x\in B_M(\lambda,\varepsilon/k)}|\nabla_t p_k(x)|$. By
the Cauchy inequality, if we take a ball $B_X(\lambda,1/k)$ in the
complexification $X$ of $M$, we have
\[
 |\nabla_t p(\zeta_\lambda)| \lesssim k\fint_{B_X(\lambda,1/k)}|p_k|\, dV_X.
\]
Therefore,
\[
 \int_M |p_k|dV_M\lesssim \frac{\varepsilon^n}{k^n}\sum_{\lambda\in \Lambda_k}
|p_k(\lambda)| + \sum_{\lambda\in \Lambda_k} \varepsilon^{n+1} k^n
\int_{B_X(\lambda,1/k)}|p_k|\, dV_X.
\]
Since $\Lambda_k$ is an $\varepsilon$-net there are at most $C\varepsilon^{-n}$
points of $\Lambda_k$ in any given ball $B_X(x,1/k)$ of center $x\in U_{1/k}$.
Thus,
\[
 \int_M |p_k|dV_M\lesssim\frac{\varepsilon^n}{k^n}\sum_{\lambda\in \Lambda_k}
|p_k(\lambda)|+ \varepsilon k^n\int_{U_{1/k}} |p_k|\, dV_X.
\]
We use now Theorem~\ref{tubular} to control the right hand side integral. If we
take $\varepsilon$ small enough we can absorb the integral in the left hand
side and we get
\[
 \int_M |p_k|dV_M\lesssim\frac{1}{k^n}\sum_{\lambda\in \Lambda_k}
|p_k(\lambda)|
\]
The $L^\infty$ version: $\sup_M |p_k|\lesssim \sup_{\Lambda_k} |p_k|$ follows
immediately by the Bernstein inequality proved in \cite{BosLevMilTay}, if
$\varepsilon$ is small enough. By interpolation we get that for any
$q\in[1,\infty)$
\[
 \int_M |p_k|^q dV_M\lesssim\frac{1}{k^n}\sum_{\lambda\in \Lambda_k}
|p_k(\lambda)|^q
\]
The reverse inequality 
\[
\frac{1}{k^n}\sum_{\lambda\in \Lambda_k}
|p_k(\lambda)|^q\lesssim  \int_M |p_k|^q dV_M,
\]
follows from Corollary~\ref{PlancherelPolya} since $\Lambda_k$ is uniformly
separated.
\end{proof}

We can finish now the proof of Theorem~\ref{bernsteincomplet}
\begin{proof}
 We have already proved that the algebracity of $M$ is equivalent to
the Bernstein inequality, this is Theorem~\ref{Bernstein} and \ref{converse}.
Moreover Proposition~\ref{nets} proves that compact algebraic manifolds have
uniformly separated sampling sequences. So we only need to check that 
if there are
such sequences then $M$ is algebraic. This is proved in a similar way to
Theorem~\ref{converse}. We
denote  by $l_k=\#\Lambda_k$ as before. Define: $R_k: \mathcal P_k(M)\to \mathbb
R^{l_k}$ as 
\[
 R_k(p)(\lambda)= p(\lambda)\qquad \forall \lambda\in \Lambda_k.
\]
Clearly, since we have the sampling property, $R_k$ is one-to-one. Therefore
$\dim(P_k(M))\le l_k$. Moreover since $\Lambda_k$ is uniformly separated, then
$l_k\le k^{n}$. This implies that $M$ is algebraic.
\end{proof}

\subsection{A general off-diagonal estimate on the reproducing kernel }
\begin{thm}
\label{prop:off-diagonal bergman kernel}Let $M$ be an $n$-dimensional
affine real algebraic variety (possibly singular), $\mu_{k}$ a sequence
of non-degenerate finite measures on $M$ with support contained in a compact 
of $M$ and denote by $K_{k}(x,y)$
the reproducing kernel for the space $H_{k}(M)$, viewed as a subspace
of $L^{2}(M,\mu_{k})$. Then there exists a positive constant $C$
such that 
\[
\int_{M\times M}\frac{1}{k^{n}}\left|K_{k}(x,y)\right|^{2}d\mu_{k}(x)\otimes 
d\mu_{k}(y)\left|x-y\right|^{2}\leq C/k
\]
\end{thm}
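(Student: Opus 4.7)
The plan is to express the left-hand side as a sum of squared Hilbert--Schmidt norms of commutators and then exploit the real structure of $M$ together with the polynomial grading to obtain a rank-based bound. Writing $|x-y|^2 = \sum_{i=1}^{m}(x_i - y_i)^2$, the inequality reduces to
\[
\sum_{i=1}^{m} \int_{M \times M} (x_i - y_i)^2 |K_k(x,y)|^2 \, d\mu_k(x)\, d\mu_k(y) \;\leq\; C\, k^{n-1}.
\]
The $i$-th summand equals $\|[M_{x_i}, P_k]\|_{\mathrm{HS}}^2$, where $M_{x_i}$ denotes multiplication by the $i$-th coordinate on $L^2(\mu_k)$ and $P_k\colon L^2(\mu_k)\to H_k(M)$ is the orthogonal projection, since $(x_i - y_i)K_k(x,y)$ is precisely the integral kernel of that commutator.

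The real structure of $M \subset \R^m$ now enters decisively: each $M_{x_i}$ is self-adjoint on $L^2(\mu_k)$, and so is $P_k$. Decomposing with respect to $L^2(\mu_k) = H_k \oplus H_k^\perp$, the commutator takes the off-diagonal form
\[
T_i := [M_{x_i}, P_k] = A_i - A_i^*, \qquad A_i := (I - P_k)\, M_{x_i}\, P_k,
\]
and since $A_i^2 = 0$ one has $\|T_i\|_{\mathrm{HS}}^2 = 2\|A_i\|_{\mathrm{HS}}^2$. Because $x_i H_k \subset H_{k+1}$, we may replace $I - P_k$ by $P_{k+1} - P_k$, so the range of $A_i$ is contained in $H_{k+1} \cap H_k^\perp$, a space of dimension $N_{k+1} - N_k$. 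Since the supports of the $\mu_k$ lie in a fixed compact, $\|M_{x_i}\|_{\mathrm{op}} \leq C$ uniformly in $k$, and the rank--norm inequality $\|A\|_{\mathrm{HS}}^2 \leq (\operatorname{rank} A)\,\|A\|_{\mathrm{op}}^2$ gives $\|A_i\|_{\mathrm{HS}}^2 \leq C^2 (N_{k+1} - N_k)$. The Hilbert polynomial of the projective closure of the complexification of $M$ has degree $n$, so $N_{k+1} - N_k = O(k^{n-1})$; summing over $i$ and dividing by $k^n$ yields the claimed $O(1/k)$ bound.

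The main obstacle is improving on the trivial rank estimate: a direct bound on $T_i = [M_{x_i}, P_k]$ gives only $\|T_i\|_{\mathrm{HS}}^2 \lesssim \|M_{x_i}\|_{\mathrm{op}}^2\, N_k = O(k^n)$, which is worse by exactly a factor of $k$. The self-adjointness splitting $T_i = A_i - A_i^*$, which requires $x_i$ to be \emph{real}, is what reduces the problem to bounding $\|A_i\|_{\mathrm{HS}}^2$ alone; the polynomial grading then pins down the rank of $A_i$ as $N_{k+1} - N_k = O(k^{n-1})$, providing the needed $k^{-1}$ gain and confirming the remark in the introduction that this estimate ``exploits, in an essential way, the real structure''.
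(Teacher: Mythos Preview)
Your proof is correct and is essentially the same argument as the paper's, recast in commutator language. The paper works with Toeplitz operators $T_f=P_kM_fP_k$ and the identity
\[
\iint (f(x)-f(y))^2|K_k(x,y)|^2\,d\mu_k\,d\mu_k = 2\,\tr\bigl(T_{f^2}-T_f^2\bigr),
\]
then bounds the trace by noting that $T_{f^2}-T_f^2$ vanishes on $H_{k-1}$ (since $T_f$ acts as multiplication by $f$ there), so only the $O(k^{n-1})$-dimensional complement $H_k\ominus H_{k-1}$ contributes. Your commutator $[M_{x_i},P_k]=A_i-A_i^*$ satisfies $\|[M_{x_i},P_k]\|_{\mathrm{HS}}^2=2\|A_i\|_{\mathrm{HS}}^2=2\,\tr(A_i^*A_i)$, and one checks directly that $A_i^*A_i=T_{f^2}-T_f^2$; so the two computations are literally the same trace. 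The only cosmetic difference is that the paper bounds this trace via the \emph{kernel} of $T_{f^2}-T_f^2$ (it contains $H_{k-1}$), while you bound it via the \emph{range} of $A_i$ (it lies in $H_{k+1}\ominus H_k$); both give an $O(k^{n-1})$ rank estimate. Your packaging has the minor advantage of making the role of self-adjointness of $M_{x_i}$ (i.e.\ the reality of the coordinates) completely explicit in the splitting $T_i=A_i-A_i^*$.
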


\begin{remark}
 Observe that if we pick $\mu_k=e^{-k\phi}\mu$ the theorem covers the weighted 
setting as-well.
\end{remark}

\begin{proof}
Given a bounded function $f$ on $M$ we denote by $T_{f}$ be the
Toeplitz operator on $H_{k}(M)\cap L^{2}(M,\mu_{k})$ with symbol
$f$, i.e. $T_{f}:=\Pi_{k}\circ f\cdot$ where $\Pi_{k}$ denotes
the orthogonal projection from $L^{2}(M,\mu_{k})$ to $H_{k}(M)$,
i.e. $T_{f}$ is the Hermitian operator on $H_{k}(M)$ determined
by 
\[
\left\langle T_{f}p_{k},p_{k}\right\rangle _{L^{2}(M,\mu_{k})}=\left\langle 
fp_{k},p_{k}\right\rangle _{L^{2}(M,\mu_{k})}
\]
for any $p_{k}\in H_{k}(M)$. The following is essentially a
well-known formula
\[
\tr T_{f}^{2}-\tr T_{f^{2}}=\frac{1}{2}\int_{M\times 
M}\left|K_{k}(x,y)\right|^{2}d\mu_{k}(x)\otimes 
d\mu_{k}(y)\left(f(x)-f(y)\right)^{2}
\]
We provide nevertheless a proof for convenience of the reader:
\begin{claim}
 Let $H$ be a reproducing kernel Hilbert space with kernel $K$, then for any 
bounded symbol $f$ we have
 \[
  \iint |f(x)-f(y)|^2 |K(x,y)|^2 = \tr (2T_{|f|^2}-T_f\circ T_{\bar 
f}-T_{\bar f}\circ T_{f}).
 \]
\end{claim}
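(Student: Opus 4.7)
The plan is to expand the integrand on the left and match each piece with a trace on the right. Write
\[
|f(x)-f(y)|^2 = |f(x)|^2 + |f(y)|^2 - f(x)\overline{f(y)} - \overline{f(x)}f(y),
\]
so that the double integral splits into four pieces. The two diagonal pieces will combine to produce $2\tr T_{|f|^2}$ and the two cross pieces will produce the two remaining trace terms.

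For the diagonal pieces, I would use the standard reproducing-kernel identity
\[
\int |K(x,y)|^2\, d\mu(y) = K(x,x),
\]
which follows from $K(x,\cdot)\in H$ together with $K(x,y)=\overline{K(y,x)}$. Combined with the trace formula $\tr T_g = \int g(x) K(x,x)\, d\mu(x)$ for bounded symbols $g$ (which is itself obtained by writing $T_g$ as an integral operator with kernel $K(x,y)g(y)$ and integrating along the diagonal), this gives
\[
\iint |f(x)|^2 |K(x,y)|^2\, d\mu(x)\, d\mu(y) = \tr T_{|f|^2},
\]
and symmetrically for the $|f(y)|^2$ term.

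For the cross pieces, I would observe that $T_f$ has integral kernel $K(x,y)f(y)$, hence the composition $T_f\circ T_{\bar f}$ has kernel $\int K(x,z)f(z)K(z,y)\overline{f(y)}\, d\mu(z)$, and therefore
\[
\tr(T_f\circ T_{\bar f}) = \iint K(x,z)K(z,x) f(z)\overline{f(x)}\, d\mu(x)\, d\mu(z) = \iint |K(x,z)|^2 f(z)\overline{f(x)}\, d\mu(x)\, d\mu(z),
\]
using $K(z,x)=\overline{K(x,z)}$. The analogous computation for $T_{\bar f}\circ T_f$ produces the conjugate cross term. Adding the four pieces with the correct signs recovers the claimed identity. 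There is no real obstacle here; the only subtlety is bookkeeping of the complex conjugates and verifying that Fubini applies, which is fine since $f$ is bounded and $K(x,\cdot)\in L^2(\mu)$ with $\|K(x,\cdot)\|^2 = K(x,x)$ integrable against the finite measure in the trace-class situation implicit in the statement.
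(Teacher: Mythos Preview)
Your proposal is correct and follows essentially the same route as the paper: expand $|f(x)-f(y)|^2$, identify the diagonal terms with $\tr T_{|f|^2}$ via $\int |K(x,y)|^2\,d\mu(y)=K(x,x)$, and identify the cross terms with $\tr(T_f\circ T_{\bar f})$ and $\tr(T_{\bar f}\circ T_f)$ by composing integral kernels. The paper carries out the same computation but phrases the trace calculations via an orthonormal basis expansion $K(x,y)=\sum_n f_n(x)\overline{f_n(y)}$ rather than the integral-kernel formula $\tr T=\int K_T(x,x)\,d\mu(x)$; this is a purely cosmetic difference.
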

\begin{proof}
$K(x,y)=\sum_n f_n(x)\overline{f_n(y)}$ and
\[
 T_f(g)(x) = \int K(x,y)f(y)g(y).
\]
We compute the traces of $T_{|f|^2}$ and of $T_f\circ T_{\bar f}$.
\[
\begin{split}
 \tr(T_{|f|^2})&=\sum_n \langle f_n,T_{|f|^2}(f_n)\rangle=\\
 &\sum_n \int_x f_n(x) \overline{\int_y K(x,y) |f|^2(y)f_n(y)}=
 \iint |K(x,y)|^2\overline {|f|^2(y)}.
\end{split}
\]
Thus
\[
 \tr(T_{|f|^2})=\iint |K(x,y)|^2{|f(x)|^2}=\iint |K(x,y)|^2{|f(y)|^2}.
\]
Now
\[
\begin{split}
 \tr(T_f\circ T_{\bar f})&=\sum_n \int_x f_n(x)
 \overline{\int_y K(x,y)f(y) T_{\bar f}(f_n)(y)}=\\
 &=\sum_n \int_x\int_y f_n(x)\overline{K(x,y)}\overline{f(y)}
 \overline{\int_w K(y,w)\overline{f(w)}f_n(w)   }=\\
 &=\iiint K(x,w)\overline{K(y,w)}\overline{K(y,w)}\overline{f(y)}f(w)=
 \iint |K(y,w)|^2 \overline{f(y)}f(w).
\end{split}
\]
Similarly 
\[
 \tr(T_{\bar f}\circ T_{f})=\iint |K(y,w)|^2 f(y)\overline{f(w)}.
\]
\end{proof}
Now, setting $f:=x_{i}$ for a fixed index $i\in\{1,\ldots,m\}$ we note
that there exists a vector subspace $V_{k}$ in $H_{k}(M)$ such
that $\dim V_{k}=H_{k}(M)-O(k^{n-1})$ such that $T_{f}=f$ and $T_{f}^{2}=f^{2}$.
We can take $V_{k}$ to be the space spanned by the restrictions to
$M$ of all polynomials of total degree at most $k-1$, i.e. $H_{k-1}$. 
The dimension of $N_k=\dim(H_k)$ is the Hilbert polynomial of degree for 
$k\ge k_0$. Thus $N_k=d k^n + O(k^{n-1})$ where $d$ is the degree of the 
variety $M$ and $n$ is the dimension. 
In 
particular, denoting by $W_{k}$ the orthogonal complement
of $V_{k}$ in $H_{k}(M)\cap L^{2}(M,\mu_{k})$ then $\dim(W_k)=O(k^{n-1})$.  
Setting 
$A_{k}:=T_{f}^{2}-T_{f^{2}}$
gives $A_{k}=0$ on $V_{k}$ and hence
\[
\tr T_{f}^{2}-\tr T_{f^{2}}=0+\tr A_{k|_{W_{k}}}\leq 
Ck^{n-1}
\]
using that $\left\langle T_{f}p_{k},p_{k}\right\rangle 
_{L^{2}(M,\mu_{k})}\leq\sup|f|_{M}\left\langle p_{k},p_{k}\right\rangle 
_{L^{2}(M,\mu_{k})}$
and $\dim W_{k}=O(k^{n-1})$.
\end{proof}

\section{Sampling and interpolation of real orthogonal polynomials}

\subsection{Sampling polynomials in a real variety}
\begin{proof}[Proof of Theorem~\ref{general}]

We equip $M$
with the distance function $d$ induced by the Euclidean distance
in $\R^{m}$, i.e. $d(x,y):=|x-y|$. We recall that the corresponding
Wasserstein $L^{1}$-distance on the space $\mathcal{P}(M)$ of all
probability measures on $M$ is defined as
\[
 W(\mu,\sigma)=\inf_\rho \iint_{M\times M} d(x,y)\, d\rho(x,y),
\]
where the infimum is taken among all probability measures such that the first 
marginal of $\rho$ is $\mu$ and the second $\sigma$. The Wasserstein distance 
metrizes the weak-$*$ convergence.

We rely on the fact that 
\[
 \frac 1{N_k} B_k(x)\, d\mu_k(x)\to \nu(x),
\]
where the convergence is in the weak-$*$ topology, see \cite{bbw}. Thus the way 
to prove the 
inequality of the theorem is by proving that there are constants 
$\{c_\lambda\}_{\lambda\in 
S_k}$, $0\le c_\lambda<1$ such that 
\[
W(\sigma_k,\beta_k)\to 0
\]
where $\sigma_k=\frac 1{N_k}\sum_{\lambda\in \Lambda_k}c_\lambda 
\delta_\lambda$, 
$\beta_k=\frac 1{N_k} B_k(x)$. Instead of the standard Wasserstein distance we 
will use an alternative expression more convenient for our purpose that it is 
equivalent to it, see \cite{lo}:
\[
 W(\mu,\sigma)=\inf_\rho \iint_{M\times M} d(x,y)\, |d\rho(x,y)|,
\]
where the $\inf$ is taken among all complex measures $\rho$ such that the first 
marginal of $f$ is $\mu$ and the second $\sigma$. The difference is that 
$\rho$ is not necessarily positive and even if we don't require that $\sigma$ 
and $\nu$ are probability measures it still metrizes the weak-$*$ convergence. 
Any candidate $\rho$ with the right marginals is called a transport plan.

The transport plan $\rho_k$ that is convenient to estimate is:
\[
 \rho_k(x,y)=\frac 1{N_k} \sum_{\lambda\in \Lambda_k} \delta_\lambda(y) 
\times g_\lambda(x)\frac{K_k(\lambda,x)}{\sqrt{B_k(\lambda)}}\,d\mu_k(x).
\]
where $K_k(\lambda,x)$ is the reproducing kernel for $\lambda$ in the space 
$H_k$ 
and $\{g_\lambda\}_{\lambda\in S_k}$ is the canonical dual frame (see 
\cite{da}) to 
$\left\{\frac{K_k(\lambda,x)}{\sqrt{B_k(\lambda)}}\right\}_{\lambda\in 
\Lambda_k}$ in 
$H_k$. The latter is a frame because $\Lambda_k$ is sampling.

If we compute the marginals of $\rho_k$ we get on one hand:
\[
\sigma_k(y)= \frac 1{N_k}\sum_{\lambda_\in 
\Lambda_k}\frac{g_\lambda(\lambda)}{\sqrt{B_k(\lambda)}}\delta_\lambda(y).
\]
and the other marginal is given by
\[
d\beta_k(y)= \frac 1{N_k} 
\sum_{\lambda}g_\lambda(x)\frac{K_k(\lambda,x)}{\sqrt{B_k(\lambda)}}\, 
d\mu_k(x)=\frac 1{N_k} K_k(x,x)\, d\mu_k(x). 
\]
In the last equality we have used that $g_\lambda$ is a dual frame of the 
normalized reproducing kernels.

The fact that $\{g_\lambda\}$ it is the canonical dual frame to the 
normalized reproducing kernels allows us to conclude that 
$\frac{g_\lambda(\lambda)}{\sqrt{B_k(\lambda)}}=\langle g_\lambda(x), 
\frac{K_k(\lambda,x)}{\sqrt{B_k(\lambda)}}\rangle$ is positive and smaller than 
one.
This follows from the following well known fact:

\begin{claim} If $\{x_n\}_n$ is a frame in a Hilbert space $H$ and $\{y_n\}_n$ 
is the dual frame then $\langle x_n, y_n\rangle \in [0,1]$. 
\end{claim}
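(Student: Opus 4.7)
The plan is to give the short, well-known proof that the cross Gram matrix between a frame and its canonical dual is a self-adjoint idempotent, and simply read off the bound $\langle x_n, y_n\rangle \in [0,1]$ from its diagonal entries.

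First I would fix notation: let $S : H \to H$ denote the frame operator $Sf = \sum_n \langle f, x_n\rangle x_n$, which by the frame inequality is bounded, self-adjoint, positive, and boundedly invertible on $H$. By definition of the canonical dual frame, $y_n = S^{-1} x_n$, and one has the reconstruction identity
\[
f = \sum_n \langle f, y_n\rangle\, x_n, \qquad f \in H,
\]
with unconditional convergence in $H$. Since $S^{-1}$ is positive, the number
\[
\langle x_n, y_n\rangle = \langle x_n, S^{-1} x_n\rangle
\]
is real and nonnegative, which settles the lower bound.

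For the upper bound I would note the Hermiticity
\[
\langle x_m, y_n\rangle = \langle x_m, S^{-1} x_n\rangle = \langle S^{-1} x_m, x_n\rangle = \overline{\langle x_n, y_m\rangle},
\]
a direct consequence of the self-adjointness of $S^{-1}$. Applying the reconstruction identity to $f = x_n$ and then pairing with $y_n$ gives
\[
\langle x_n, y_n\rangle \;=\; \sum_m \langle x_n, y_m\rangle\, \langle x_m, y_n\rangle \;=\; \sum_m |\langle x_n, y_m\rangle|^2,
\]
so in particular $\langle x_n, y_n\rangle \geq |\langle x_n, y_n\rangle|^2 = \langle x_n, y_n\rangle^2$. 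Combined with $\langle x_n, y_n\rangle \geq 0$, this forces $\langle x_n, y_n\rangle \leq 1$, completing the proof.

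There is no real obstacle here; the argument is a couple of lines once one writes down the canonical dual and uses the reconstruction formula. The only subtlety worth mentioning is that the series in the reconstruction identity converges unconditionally in $H$, so that inner-producting with $y_n$ may be carried out term by term — but this is a standard consequence of the frame inequalities and requires no separate justification.
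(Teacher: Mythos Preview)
Your proof is correct and follows essentially the same route as the paper: both derive the identity $\langle x_n, y_n\rangle = \sum_m |\langle x_n, y_m\rangle|^2$ from the reconstruction formula and read off $0 \le \langle x_n, y_n\rangle \le 1$ from it. The only cosmetic difference is that you obtain nonnegativity directly from the positivity of $S^{-1}$, whereas the paper extracts it from the sum-of-squares identity itself.
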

\begin{proof}
 Let $T$ be the frame operator, i.e: $T(x)=\sum \langle x,x_n\rangle  x_n$. 
Since $\{x_n\}_n$ is a frame then $T$ is bounded, self-adjoint and invertible. 
The definition of the dual frame is $T(y_n)=x_n$. For any vector $v\in H$ we 
have
\[
 v=T(T^{-1} v)=\sum_n \langle T^{-1}v,x_n\rangle  x_n.
\]
In particular
\[
x_k = \sum_n \langle y_k,x_n\rangle  x_n,
\]
and multiplying by $y_k$ at both sides we get
\[
 \langle x_k,y_k\rangle = \sum_n |\langle y_k,x_n\rangle |^2.
\]
Therefore $\langle x_k,y_k\rangle \ge 0$ and $\langle x_k,y_k\rangle>0$ unless 
$x_k=0$. Moreover,
\[
 \langle x_k,y_k\rangle - |\langle y_k,x_k\rangle |^2 = 
 \sum_{n\ne k} |\langle y_k,x_n\rangle |^2\ge 0.
\]
Thus 
\[
\langle x_k,y_k\rangle (1-  \langle x_k,y_k\rangle)\ge 0,
\]
therefore $\langle x_k,y_k\rangle\le 1$ too.

\end{proof}

Finally we need to estimate
\[
I= \iint_{M\times M} |x-y||d\rho_k|\le \frac 1{N_k}\sum_{\lambda\in \Lambda_k} 
\int_M |\lambda-x| 
|K_k(\lambda,x)|\frac{g_\lambda(x)}{\sqrt{B_k(\lambda)}}\, d\mu_k(x). 
\]
Since $\|g_\lambda\|_2 \simeq 1 $ we can estimate
\[
 I^2\lesssim \frac 1{N_k} \sum_{\lambda\in \Lambda_k} \int_M |\lambda-x|^2 
\frac{|K_k(\lambda,x)|^2}{B_k(\lambda)}
\]
We would like to use the sampling inequality \eqref{sampineq}, 
and obtain that
\begin{equation}\label{disccont}
\begin{split}  \frac 1{N_k} \sum_{\lambda\in \Lambda_k} \int_M
|\lambda-x|^2 
\frac{|K_k(\lambda,x)|^2}{B_k(\lambda)}\,d\mu_k(x)\le \\ 
\frac 1{N_k} 
\iint_{M\times 
M} |y-x|^2 
|K_k(y,x)|^2\, d\mu_k(x)d\mu_k(y).
\end{split}
\end{equation}
This we cannot do immediately because the polynomial $(x-y)K_k(y,x)$ (in the 
variable $y$) is of degree $k+1$ instead of $k$ as required in \eqref{sampineq}.

But we are assuming that $(\mu,\phi)$ define spaces with reproducing kernels of 
moderate growth. Thus $B_{k+1}\simeq B_k$ in $M$. Therefore if 
$\{\Lambda_k\}_k$ is sampling for $H_k(M)$ then $\{\Lambda_{k+1}\}_k$ is 
sampling for $H_k(M)$. Thus, it is harmless to assume that $\Lambda_k$ is 
sampling both for $H_k$ and for $H_{k+1}$ and we have established 
\eqref{disccont}.
Then, using Theorem~\ref{prop:off-diagonal bergman kernel}, we obtain
\[
 W(\sigma_k,\beta_k)=O(1/\sqrt{k}),
\]
as desired.
\end{proof}

\subsection{Interpolating polynomials in a real 
variety}\label{sec:interpolation}

\begin{definition}
A sequence $\Lambda_{k}$ of sets of points on $M$ is said to be
\emph{interpolating} for $H_{k}(M)$ if the family of normalized reproducing
kernels 
\[
\kappa_{\lambda}:=K_{k}(\cdot,\lambda)/\|K_{k}(\cdot,\lambda)\|
\]
for $\lambda\in\Lambda_{k}$, is a Riesz sequence in the Hilbert space
$H_{k}(M)$, i.e.:
\[
\frac 1{C} \sum_{\lambda\in\Lambda_k} |c_\lambda|^2 \le \|
\sum_{\lambda\in \Lambda_k} c_\lambda
\kappa_{\lambda}\|^2\le C   \sum_{\lambda\in\Lambda_k} |c_\lambda|^2,\quad
\forall \{c_\lambda\}_{\lambda\in\Lambda_k}\in \ell_2
\] 
where we will assume that $C$ can be taken independent of $k$.
\end{definition}

This property is equivalent to the Plancherel-Polya inequality:
\begin{equation}\label{plancherel-polya}
\sum_{\lambda\in\Lambda_k}\frac{|f(\lambda )|^2}{K_k(\lambda,\lambda)}
 \leq C\left\Vert f\right\Vert ^{2},\qquad \forall f\in H_k(M)
\end{equation}
and the \emph{interpolation property}:  for any sequence of sets of
values 
$\{c_\lambda^{(k)}\}_{\lambda\in \Lambda_k}$ there are functions $f_k\in H_k$
such 
that $f_k(\lambda^{(k)})=c_\lambda^{(k)}$ with 
\begin{equation}
\|f_k\|^2\le C \sum_{\lambda\in\Lambda_k}
\frac{|c_\lambda|^2}{K_k(\lambda,\lambda)},
\end{equation}
and again the constant $C$ should not depend on $k$.

The property that the collection $\{\kappa_\lambda\}_{\lambda\in \Lambda_k}$ is
a frame in $H_k(M)$ is a quantitative version of the fact that the normalized
reproducing kernels span the whole space and the property that they are a Riesz
sequence quantifies the fact that they are linearly independent.

\begin{proof}[Proof of Theorem~\ref{generalinterp}]
Let $F_k\subset H_k$ be the subspace spanned by 
\[
 \kappa_\lambda(x)=K_k(\lambda,x)/\sqrt{K_k(\lambda,\lambda)}\qquad \forall
\lambda \in \Lambda_k.
\] 
Denote by $g_\lambda$ the dual (biorthogonal) basis to $\kappa_\lambda$ in
$F_k$. We 
have clearly that 
\begin{itemize}
\item We can span any function in $F_k$ in terms of $\kappa_\lambda$, thus:
\[
 \sum_{\lambda\in \Lambda_k} \kappa_\lambda(x)g_\lambda(x)=\mathcal K_k(x,x),
\]
where $\mathcal K_k(x,y)$ is the reproducing 
kernel of the subspace $F_k$.  
\item The norm of $g_\lambda$ is uniformly bounded since $\kappa_\lambda$ was a 
uniform Riesz sequence.
\item $g_\lambda(\lambda)=\sqrt{K_k(\lambda,\lambda)}$. This is due to the 
biorthogonality and the reproducing property.
\end{itemize}
We are going to prove that the measure $\sigma_k=\frac 1{N_k}\sum_{\lambda\in 
\Lambda_k}\delta_\lambda$, 
and the measure $\beta_k=\frac 1{N_k} \mathcal K_k(x,x)d\mu(x)$ are very 
close to each other: 
$W(\sigma_k,\beta_k)\to 0$.
In this case then since $\mathcal K_k(x,x)\le K_k(x,x)$ and $\frac 1{N_k} 
B_k(x)\, 
d\mu\to d\nu$, where $\nu$ is the normalized equilibrium measure on $M$, then
$\limsup_k \sigma_k\le \nu$.

In order to prove that $W_k(\sigma_k,\beta_k)\to 0$ we use the transport plan:
\[
 \rho_k(x,y)=\frac 1{N_k} \sum_{\lambda\in \Lambda_k} \delta_\lambda(y) 
\times g_\lambda(x)\kappa_\lambda(x)\,d\mu(x)
\]
It has the right marginals, $\sigma_k$ and $\beta_k$ 
and we can estimate the integral in the same way as in the proof of 
Theorem~\ref{general}
\[
W(\sigma_k,\beta_k)\le
\iint_{M\times M} |x-y||d\rho_k|=O(1/\sqrt{k}).
\]
The only point that merits a clarification is that we need an inequality 
similar to \eqref{disccont}, i.e:
\[
\begin{split}  \frac 1{N_k} \sum_{\lambda\in \Lambda_k} \int_M
|\lambda-x|^2 
\frac{|K_k(\lambda,x)|^2}{ K_k(x,x)}\,d\mu(x)\le \\ 
\frac 1{N_k} 
\iint_{M\times 
M} |y-x|^2 
|K_k(y,x)|^2\, d\mu(x)d\mu(y).
\end{split}
\]
This time this is true because $\Lambda_k$ is a uniformly separated 
sequence by Proposition~\ref{sepint} and therefore, it is a Plancherel-Polya 
sequence, see Proposition~\ref{PlancherelPolya}.
\end{proof}

\subsection{Sampling in convex domains}
We proceed with the proof of Theorem~\ref{thm:samplingconvex}. The only part 
that we need to proof are the estimates for the reproducing kernel 
\eqref{kernelconvex}. If these are proved, then it follows that the measure has 
the Bernstein-Markov property \eqref{BM} and the kernel is of moderate growth
\eqref{MG}, thus we can 
apply Theorem~\ref{general}.

\begin{proof}
We start by the case when $\Omega=\B$ is the unit ball. We denote by 
$B_\Omega(x)$ the Bergman function which is the reproducing kernel $K_k(x,x)$ 
evaluated at the diagonal of the space of polynomials of total degree $k$ 
endowed with the $L^2$ norm with respect to the standard volume form. To get a 
lower bound for $B_\Omega$ we 
consider the cube $Q$ such that the ball is inside it and 
tangent to its faces. Clearly by the comparison principle of the Bergman 
functions $B_\B\ge B_Q$ and 
$B_Q(x)\simeq B_I(x_1)\cdots B_I(x_n)$ where $B_I$ is the one dimensional 
Bergman kernel associated to the interval. This is known 
to be, see \cite[p. 108]{Nevai}:
\[
 B_I(x) \simeq  \min\left( \frac{k}{\sqrt{d(x)}}, k^{2}\right).
\]
This implies that for points $x$ in the interval that joins the origin with the 
center of one of the faces of the cube $Q$ we have
\[
 B_Q(x)\simeq \min\left( \frac{k^n}{\sqrt{d(x)}}, k^{n+1}\right).
\]
Thus we have the lower bound for $B_{\B}$ that we wanted.
To get the upper bound we will work in dimension $n=2$ for simplicity but a 
similar argument works in any dimension. Observe that the space of polynomials 
of degree smaller or equal than $k$ is spanned by the functions
$\{\rho^j\cos^j(t),\rho^j\sin^j(t)\}_{j=0,\ldots,k}$ in polar coordinates in 
the interval $[0,1]\times [0,2\pi]$ with the measure $\rho d\rho$ in the first 
interval and $dt$ in the second. Consider now the space of functions 
$\tilde H_k$ in 
the product interval such that it is spanned by 
$\{\rho^{j}\cos^m(x),\rho^j\sin^m(x)\}_{j=0,\ldots,k\ m=0,\ldots,k}$. The space 
$\tilde H_k$ is bigger than the space of polynomials thus the Bergman function 
at the 
diagonal $B_{\tilde H_k}(x)\ge B_{\B,k}(x)$. But $B_{\tilde H_k}$ is easier to 
analyze 
because it is a product space of two one-dimensional spaces: The space of 
one dimensional polynomials of degree smaller $k$ with the norm $\rho\,d\rho$ 
in the interval $[0,1]$ and the space of trigonometric polynomials 
$\{\sin^j(x),\cos^j(x)\}_{j=0,\ldots k}$ with the measure $dx$ in $[0,2\pi]$. 
The Bergman function of $\tilde H_k$ is the product of the one-dimensional 
Bergman 
functions. The Bergman function corresponding to the trigonometric polynomials 
is constant by invariance under rotations and by dimensionality it must be 
$2k+1$. The space of polynomials in $\rho$ are a space of Jacobi polynomials 
and its Bergman function has been estimated, see \cite[p.~108]{Nevai}:
\[
B_J(x) \simeq  \min\left( \frac{k}{\sqrt{d(x)}}, k^{2}\right)\, \quad \forall 
x>1/2.
\]
Thus finally when $n=2$ we get
\[
 B_{\B}\lesssim \min\left( \frac{k^2}{\sqrt{d(x)}}, k^{3}\right).
\]
Similarly in higher dimension we get
\[
 B_{\B}\lesssim \min\left( \frac{k^n}{\sqrt{d(x)}}, k^{n+1}\right).
\]
Now for an arbitrary convex domain  there is an $r>0$ (small)  and an 
$R>0$ (big) that depend only on the
domain such that for any point $x$ in the boundary of the domain, there
is a ball $B(y,r)$ inside the domain tangent at $x$ and with center $y$ in the
normal direction to the boundary of the domain at $x$ and a cube
$Q(R)$ tangent to the domain at $x$ in the middle of a face of the
cube and such that the domain is contained in the cube. Again by the comparison 
principle of the Bergman function we get
\[
 \min\left( \frac{k^n}{\sqrt{d(x)}}, k^{n+1}\right)\lesssim
 B_{Q(R)}(x)\lesssim B_\Omega(x)\lesssim B_{B(y,r)}(x)\lesssim
 \min\left( \frac{k^n}{\sqrt{d(x)}}, k^{n+1}\right).
\]
\end{proof}

\subsection{Existence of interpolating and sampling sequences in the 
one-dimensional setting}
We conclude the paper by recalling some classical facts which are special for 
the one dimensional setting.

Let $\mu$ be a finite measure on $\R$ with compact set $K$ and
assume that $\mu$ has the Bernstein-Markov property with respect to $K$. By
the classical Christoffel-Darboux formula there exists
constants $a_{k+1}$ such that

\[
K_{k}(x,y)=a_{k+1}\frac{q_{k+1}(x)q_{k}(y)-q_{k}(x)q_{k+1}(y)}{x-y}
\]
where $q_{k+1}$ is the $k$ th orthogonal polynomial (with respect to $\mu)$.
Let $\Lambda_{k}:=\{x_{j}^{(k)}\}$ be the set of $k+1$ zeros of
$q_{k}$ (which by classical results are indeed all distinct and contained
in the support $K$ of $\mu)$. Then $K_{k}(x_{i}^{(k)},x_{j}^{(k)})=0$
if $i\neq j$, as follows immediately from the Christoffel-Darboux
formula. Hence, normalizing $K_{k}(\cdot,x_{j}^{(k)})$ yields an
orthonormal base in $H_{k}(M,\mu)$ and as a consequence the following
``sampling equality'' holds for any $p_{k}\in H_{k}(M):$ 
\[
\int_{M}\left|p_{k}\right|^{2}d\mu=\sum_{i}\frac{1}{B_{k}(x)}\left|p_{k}(x_{i}^{
(k)})\right|^{2},
\]
 and in particular the sequence $\Lambda_{k}$ is both sampling and 
interpolation. Finally,
recall that by classical results the normalized Dirac measure $\delta_{k}$
on the zeros $\Lambda_{k}$ has the same weak limit points as $B_{k}/(k+1)\mu$.
In particular, if $\mu$ has the Bernstein-Markov property, then 
$\frac 1{k}\sum\delta_{k}\rightarrow\mu_{eq}$,
which is thus consistent with Theorem~\ref{general} and 
Theorem~\ref{generalinterp}.

\end{document}